\pgfplotsset{compat=1.12}
\title{Limit theorems for the number of crossings and stress in projections of the random geometric graph}
\author{Hanna Döring
\thanks{Osnabrück University, Germany, hanna.doering@uni-osnabrueck.de} \and Lianne de Jonge\thanks{Osnabrück University, Germany, lianne.de.jonge@uni-osnabrueck.de}}
\date{\today}
\theoremstyle{definition}
\newtheorem{defn}{Definition}[section]
\newtheorem{prop}{Proposition}[section]
\newtheorem{rem}{Remark}[section]
\newtheorem{thm}{Theorem}[section]
\newtheorem{cor}{Corollary}[section]
\newtheorem{lem}{Lemma}[section]
\renewcommand{\l}{\left}
\renewcommand{\r}{\right}
\DeclareMathOperator{\Ex}{\mathbb{E}}
\DeclareMathOperator{\Var}{\mathbb{V}}
\DeclareMathOperator{\stress}{stress}
\DeclareMathOperator{\Cov}{\C ov}
\DeclareMathOperator{\Poi}{Poi}
\newcommand{\ind}{\mathds{1}}
\newcommand{\indi}[1]{\ind\l(#1\r)}
\newcommand{\C}{\mathbb{C}}
\newcommand{\E}{\mathbb{E}}
\newcommand{\R}{\mathbb{R}}
\newcommand{\cB}{\mathcal{B}}
\newcommand{\cH}{\mathcal{H}}
\newcommand{\cN}{\mathcal{N}}
\newcommand{\cO}{\mathcal{O}}
\newcommand{\bfB}{\mathbf{B}}
\newcommand{\bfL}{\mathbf{L}}
\newcommand{\bfM}{\mathbf{M}}
\renewcommand{\epsilon}{\varepsilon}
\renewcommand{\phi}{\varphi}
\newcommand{\radt}{r_t}
\begin{document}

\maketitle
\begin{abstract}
    We consider the number of edge crossings in a random graph drawing generated by projecting a random geometric graph on some compact convex set $W\subset \mathbb{R}^d$, $d\geq 3$, onto a plane. The positions of these crossings form the support of a point process. We show that if the expected number of crossings converges to a positive but finite value, this point process converges to a Poisson point process in the Kantorovich-Rubinstein distance. We further show a multivariate central limit theorem between the number of crossings and a second variable called the stress that holds when the expected vertex degree in the random geometric graph converges to a positive finite value.
\end{abstract}

\noindent \textsc{Keywords.} graph crossing number, random geometric graph, Poisson point process, limit theorems.\\
\noindent \textsc{MSC classification.} 
60F05, 
60D05, 
68R10.

\section{Introduction}
\label{sec:introduction}
We consider the random geometric graph (RGG) generated in a compact convex set $W\subset \R^d$ with $d\geq 3$ and volume one. The vertices of the graph are defined by a homogeneous Poisson point process on $W$ with intensity $t$. Two vertices $v$ and $w$ are connected by an (undirected) edge $\{v,w\}$ whenever $\|v-w\|\leq\radt$, where $\|\cdot\|$ denotes the Euclidean norm.

To visualize this graph, we embed its vertices into $\R^2$ and connect each pair with a line whenever an edge exists, in which case it is likely that some lines need to cross. A straight-line drawing with few edge crossings tends to be the most aesthetically pleasing, see \cite{PCJ96}.
The minimum number of crossings in such a drawing is called the rectilinear crossing number. Finding this number is a classical problem in graph theory where a lot is still unknown, see for example \cite{fox2019}, \cite{Pach2000} or the survey \cite{schaefer2013}.

One can naturally extend this problem to random graphs by studying the crossing number of a random graph or the number of crossings in a random drawing of a graph.
In \cite{Pach2000} and \cite{Spencer2002} the expectation of the crossing number of an Erd\H{o}s-R\'enyi random graph as well as concentration inequalities are derived, and a generalization to $k$-planarity is considered in \cite{Asplund2018}. An extension to weighted Erd\H{o}s-R\'enyi random graphs is studied in \cite{Mohar2011}, 
where the Bernoulli-weight for the presence of an edge is given by i.i.d random variables for each edge.
In \cite{arenasvelilla20231} and \cite{arenasvelilla2023} Arenas-Velilla et al. study the number of crossings of a fixed graph $G$ whose vertices are randomly placed in convex position. They prove a central limit theorem using Stein's method.

In this paper we study a setting introduced in \cite{Chimani2018}, 
a RGG where the graph drawing is created by projecting the vertices and edges onto a plane, as shown in Figure \ref{fig:projection}. Compared to the methods of drawing the graph used by \cite{arenasvelilla20231}, \cite{arenasvelilla2023}, and \cite{moon1965}, this method has the advantage that it preserves some of the geometric properties of the original graph.  We refer to the number of edge crossings as the \emph{crossing number of the projection}, or simply `crossing number' when its meaning is clear from context. By construction it serves as an upper bound for the rectilinear crossing number of the original RGG in $W$.

\begin{figure}
    \centering
    \includegraphics[width = .3\textwidth]{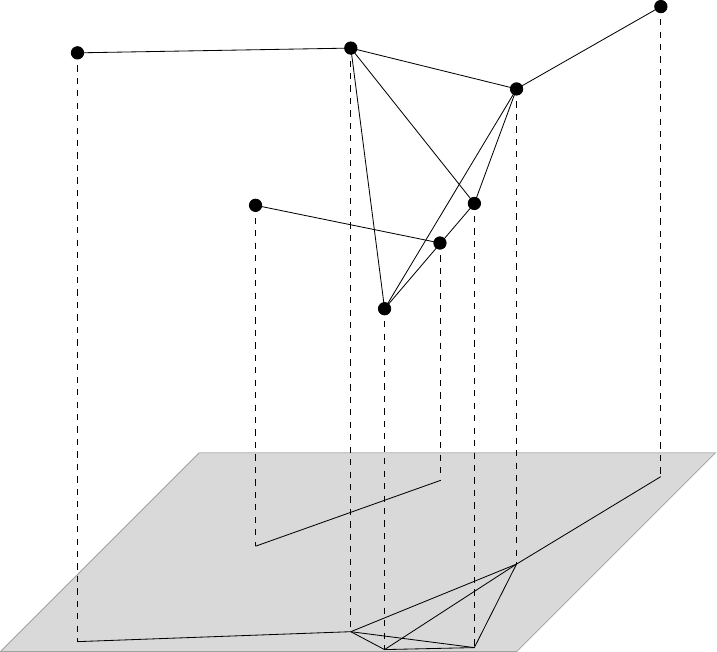}
    \caption{Projection of a graph in $\R^3$ onto a plane.}
    \label{fig:projection}
\end{figure}

The connection between the crossing number of a projection and another quantity called \emph{stress} was studied in \cite{Chimani2018}. The stress is a measure of how much the distances between vertices in the original graph and vertices in its drawing differ. In the context of projected random geometric graphs, it makes sense to consider the Euclidean distance between vertices in $\R^d$ and in the projection so that the graph stress is low when projecting barely changes the distances between vertices. However, if two vertices that are far away in $\R^d$ are close to each other in the projection, the stress is high. Empirical evidence suggests that drawings with low stress also tend to have fewer crossings. This is supported by \cite{Chimani2018}, where it is shown that there indeed exists a positive correlation between stress and the crossing number in the model described above.

This paper extends the results from \cite{Chimani2018} by proving a multivariate central limit theorem for the crossing number and stress. Additionally, we show that the point process of edge intersections converges to a Poisson point process when $\radt$ is chosen to decrease at an appropriate speed as $t$ tends to infinity.

It is easy to see that the expected degree of a randomly chosen vertex in the random geometric graph is of order $t\radt^d$. Based on this, we distinguish between the three regimes introduced in \cite{Penrose2003}:
\begin{enumerate}
    \item \emph{sparse regime}: $\lim_{t\to\infty}t\radt^d = 0$ implies that the expected degree tends to zero as $t$ goes to infinity;
    \item \emph{thermodynamic regime}: $\lim_{t\to\infty} t\radt^d = c \in (0,\infty)$ implies that the expected degree converges to some positive but finite constant;
    \item  \emph{dense regime}: $\lim_{t\to\infty} t\radt^d = \infty$ implies that the expected degree tends to infinity.
\end{enumerate}
The expected crossing number of the projection is of order $t^4\radt^{2d+2}$ (see \cite[Theorem 1]{Chimani2018}).
For the convergence to a Poisson point process we consider a part of the sparse regime where the expected crossing number of the projection tends to a positive but finite value, which is when $t^2\radt^{d+1} \to c \in(0,\infty)$ as $t$ tends to infinity. For the multivariate central limit theorem, we focus on the thermodynamic and the dense regime, for which the expected crossing number of the projection tends to infinity.

The study of the crossing number in random graphs is not only mathematically an interesting problem. As argued in \cite{Chimani2018}, the number of crossings in the projection in the sparse regime is indeed with high probability only a constant factor away from the rectilinear crossing number of the RGG. Therefore the projection provides an algorithm for drawing a graph with low crossing number. Limit theorems with rates of convergence are decisive for determining the approximation error in this algorithm.

The structure of this paper is as follows: Section \ref{sec:definitions notation} introduces the mathematical definitions and notation. Then, the convergence to a Poisson point process and the central limit theorem are stated, which are then proven in Section \ref{sec:sparse} and Section \ref{sec:multivariate clt} respectively.
The convergence to a Poisson point process is an application of a limit result for Poisson $U$-statistics proven by Decreusefond et al. using Stein's method and the Malliavin formalism \cite{Decreusefond2016}. The main difficulty in Section \ref{sec:sparse} is to derive uniform bounds on the intensity measure of the point process of crossings.
In Section \ref{sec:multivariate clt} we prove good bounds on the first and second order difference operators to apply a multivariate limit theorem for a vector of Poisson functionals by Schulte and Yukich \cite{Schulte2019}. 

\subsection{Definitions and statements of results}
\label{sec:definitions notation}
Let $\lambda_n$ for $n\geq 1$ denote the $n$-dimensional Lebesgue measure. Our observation window $W\subset \R^d$ with $d\geq 3$ is a convex body such that $\lambda_d(W) = 1$. Let $\eta_t$ be a Poisson point process on $W$ with intensity $t>0$. Then $\E \eta_t(W) = t$ since the volume of $W$ is one. The support of $\eta_t$ defines the vertex set $V$ of the \emph{random geometric graph} (RGG) $G = (V,E)$ with
\begin{equation*}
    E = \{\{v,w\} \colon\, v,w\in V \text{ and } \|v-w\|\leq \radt\}
\end{equation*}
for some parameter $\radt$ going to zero as $t$ tends to infinity. Here, $\|\cdot\|$ denotes the Euclidean norm. We denote the line segment between two points $v,w\in\R^d$ by $[v,w]$. Note that no points exist outside of $W$, meaning that none of the edges cross the boundary of $W$. Almost surely vertices are not projected on other vertices or edges.

The orthogonal projection of the graph onto a plane $L$ is denoted by $G_L$ and is constructed by projecting all vertices onto $L$ and connecting the vertices by an edge whenever an edge exists in $G$. The projection of a point $v$ and a set $A\subset \R^d$ are denoted by $v|_L$ and $A|_L\coloneqq \{v|_L\colon v\in A\}$ respectively. We write $W_L\coloneqq W|_L$ for the projection of $W$ onto $L$ to avoid cumbersome notation. The orthogonal complement of $L$ is denoted by $L^{\perp}$.

Two edges $\{v_1,v_2\}$ and $\{w_1,w_2\}$ between four distinct vertices cross in the projection if $[v_1,v_2]|_L\cap [w_1,w_2]|_L\neq \emptyset$. These intersections form the support of a point process $\xi_t$. Let
\begin{equation*}
    S_t \coloneqq \{(v_1,v_2,w_1,w_2)\in V^4_{\neq}\colon \|v_1-v_2\|\leq \radt,\, \|w_1-w_2\|\leq \radt,\, [v_1,v_2]|_L\cap [w_1,w_2]|_L\neq \emptyset\}
\end{equation*}
be the set of quadruples of distinct vertices forming two crossing edges. Then
\begin{equation}
    \xi_t(A) \coloneqq \frac{1}{8} \sum_{
    (v_1,v_2, w_1,w_2)\in S_t} 
    \delta_{[v_1,v_2]|_L\cap [w_1,w_2]|_L}(A)
    \label{eq:crossing point process}
\end{equation}
counts the number of crossings in a Borel set $A\subset L$. The pre-factor prevents the double counting of crossings. The crossing number of the projection is denoted $\xi_t(L)$. The intensity measure $\bfL_t$ of the point process is defined as
\begin{equation*}
    \bfL_t(A) \coloneqq \E\xi_t(A).
\end{equation*}

For all Borel sets $A$, the variable $\xi_t(A)$ is a $U$-statistic of order four, meaning that it can be written in the form
\begin{equation*}
    \xi_t(A) = \sum_{(v_1,v_2,v_3,v_4)\in V^4_{\neq}} f(v_1,v_2,v_3,v_4)
\end{equation*}
for some symmetric function $f$ depending on $A$. Here we have
\begin{align*}
    f(v_1,v_2,v_3,v_4) = &\frac{1}{24}(\indi{\|v_1-v_2\|\leq \radt,\, \|v_3-v_4\|\leq \radt,\,[v_1,v_2]|_L\cap [v_3,v_4]|_L\cap A\neq \emptyset}\\
    &+\indi{\|v_1-v_3\|\leq \radt,\, \|v_2-v_4\|\leq \radt,\,[v_1,v_3]|_L\cap [v_2,v_4]|_L\cap A\neq \emptyset}\\
    &+\indi{\|v_1-v_4\|\leq \radt,\, \|v_2-v_3\|\leq \radt,\,[v_1,v_4]|_L\cap [v_2,v_3]|_L\cap A\neq \emptyset}).
\end{align*}
Since we consider three permutations of the vertices in each summand, the pre-factor from \eqref{eq:crossing point process} is divided by three, explaining the pre-factor $\frac{1}{24}$.

Throughout this paper, we let $B_d(x,r) \coloneqq \{y\in\R^d\colon \|x-y\|\leq r\}$ denote the closed ball with radius $r$ around $x$ and $B_d \coloneqq B_d(0,1)$ the unit ball around the origin. The balls $B_2(x,r)$ and $B_2$ are always considered to lie in the plane $L$. The inner parallel set of a convex set $K$ is denoted by $K_{-\delta} \coloneqq \{x\colon\,(x+\delta B_d)\subset K\}$. We write $f(x) = \cO(g(x))$ as $x\to\infty$ if there exist $M>0$ and $x_0$ such that $|f(x)| \leq Mg(x)$ for all $x\geq x_0$. We also write $f(x) = \cO_x(g(x))$ and omit the $x\to\infty$.

\bigskip

Our first result concerns the distribution of crossings in the sparse regime. We show convergence of $\xi_t$ to a Poisson point process in the Kantorovich--Rubinstein distance. The definition of this distance metric involves the total variation distance. For two finite measures $\mu_1$ and $\mu_2$ on the Borel sets $\cB(L)$ of $L$, the total variation distance is defined as
\begin{equation*}
    \dd_{TV}(\mu_1, \mu_2) \coloneqq \sup_{A\in \cB(L)} | \mu_1(A) - \mu_2(A) |.
\end{equation*}
The Kantorovich--Rubinstein distance is then the optimal transport cost between two probability measures
\begin{equation*}
    \dd_{K\!R}(\mu_1, \mu_2) =\inf_{\mathbf C\in\Sigma(\mu_1,\mu_2)} \int \dd_{TV}(\omega_1, \omega_2) {\mathbf C}\bigl(\dd(\omega_1,\omega_2)\bigr)
\end{equation*}
where $\Sigma(\mu_1,\mu_2)$ is the set of all couplings between $\mu_1$ and $\mu_2$ and we integrate over all $\sigma$-finite counting measures on the underlying space. For two point processes $\xi_1$ and $\xi_2$ distributed according to $\mu_1$ and $\mu_2$ respectively, we will write $\dd_{K\!R}(\xi_1,\xi_2)$ instead of $\dd_{K\!R}(\mu_1,\mu_2)$.
By \cite[Proposition 2.1.]{Decreusefond2016}, convergence in $\dd_{K\!R}$ implies convergence in distribution.

The crossings in the RGG can also be studied in two dimensions without projection. In this case, the definitions above can easily be adjusted by removing all mentions of the projection onto $L$. The authors are not aware of results related to the distribution of these crossings and therefore include the $d=2$ case in the statement of the theorem below.
\begin{thm}[Convergence to Poisson point process]
    Consider the projected RGG with intensity $t$, dimension $d\geq 2$, and radius $\radt$ such that $t^2\radt^{d+1}\to c > 0$ as $t\to\infty$ and let the edge crossings form the support of the point process $\xi_t$, as defined in \eqref{eq:crossing point process}. Let $\zeta$ be a Poisson point process on $L$ with intensity measure $\bfM$ defined by
    \begin{equation*}
        \bfM(A) = \frac{1}{8} c_d c^2\int_A \lambda_{d-2}((v + L^{\perp})\cap W)^2\dd v,
    \end{equation*}
    for all Borel-sets $A\subset L$, where $c_d = 8\pi \kappa_{d-2}^2 \bfB\l(3,d/2\r)^2$ with $\bfB$ the beta function. Then,
    \begin{equation*}
        \dd_{K\!R}(\xi_t,\zeta) = \begin{cases}
            \cO_t(\sqrt{\radt}) + \cO_t(c^2 - t^4 \radt^{6}) & \text{if } d = 2\\
            \cO_t(\radt) + \cO_t(c^2 - t^4 \radt^{2d+2}) &\text{if } d \geq 3.
        \end{cases}
    \end{equation*}
    Convergence in distribution of $\xi_t$ to $\zeta$ follows.
    \label{thm:point process}
\end{thm}

The term $\cO_t(c^2 - t^4 \radt^{2d+2})$ indicates that the rate of convergence of $\xi_t$ to $\zeta$ depends on how fast $t^2\radt^{d+1}$ approaches its limiting value.

This theorem is proven in Section \ref{sec:sparse}. The proof relies on \cite[Theorem 3.1.]{Decreusefond2016}, which roughly states that a $U$-statistic $\xi_t$ converges to a Poisson point process if the intensity measure converges in total variation distance and the difference between the expectation and the variance of $\xi_t(L)$ goes to zero.

\begin{rem}
    The paper \cite{Chimani2018} also considered a random projection plane $L$, whereas we consider a fixed plane. In the sparse regime described above, the limiting distribution of the crossing number would be a mixed Poisson distribution where the intensity is a random measure depending on the orientation of $L$.
\end{rem}

\bigskip
\noindent
The second result concerns another quantity called stress. Let $\dd_0$ and $\dd_L$ be two distance metrics on the vertices in a graph, then
\begin{equation*}
    \stress(G,G_L) \coloneqq \frac{1}{2}\sum_{(v_1,v_2)\in V^2_{\neq}} w(v_1,v_2) \l(\dd_0(v_1,v_2) - \dd_L(v_1,v_2)\r)^2,
\end{equation*}
where $w$ is some weight function. We will denote the stress between two vertices by $$S(v_1,v_2;V) \coloneqq w(v_1,v_2) \l(\dd_0(v_1,v_2) - \dd_L(v_1,v_2)\r)^2.$$
The $V$ indicates that $S$ might depend on $V\setminus\{v_1,v_2\}$, which is the case if $\dd_0$ or $\dd_L$ depends on $V$. Without this dependence, the stress is a $U$-statistic, which is the assumption made in \cite{Chimani2018}.

A natural choice for $w$ is $$w(v_1, v_2) = \frac{1}{\dd_0(v_1,v_2)^2},$$ in which case
\begin{equation*}
    \stress(G,G_L) = \frac{1}{2}\sum_{(v_1,v_2)\in V^2_{\neq}}  \l(1 - \frac{\dd_L(v_1,v_2)}{\dd_0(v_1,v_2)}\r)^2.
\end{equation*}
For the RGG, it makes sense to let $\dd_0$ be the Euclidean distance between points in $\R^d$ and $\dd_L$ the Euclidean distance in the projection. Note that in this case $\dd_L \leq \dd_0$, from which $S(v_1,v_2;V)\in[0,1]$ follows for all $v_1,v_2\in W$. 

The expectation and variance of the crossing number and stress, as well as the covariance between the two quantities, have been calculated in \cite{Chimani2018} for the situation where the value of $S(v_1,v_2;V)$ only depends on the two vertices $v_1$ and $v_2$. An extension of their results to more general stress functions can be found in Appendix \ref{ap:variance}.

The known expressions for the variances and covariance can be used to transform the crossing number of the projection, $\xi_t(L)$, and the stress to have mean zero and bounded variance. 
In Section \ref{sec:multivariate clt}, we use these results and \cite[Theorem 1.1]{Schulte2019} to show a central limit theorem in the $\dd_3$-distance, which is defined as follows:
\begin{defn}
    Let $\cH$ be the set of functions $g\colon \R^d\to\R$ admitting continuous partial derivatives up to order three such that
    \begin{align*}
        &\max_{1\leq i_1 \leq i_2 \leq d} \sup_{x\in\R^d}\l|\frac{\partial^2}{\partial x_{i_1}\partial x_{i_2}}g(x)\r|\leq 1 \text{, and } &\max_{1\leq i_1 \leq i_2 \leq i_3\leq d} \sup_{x\in\R^d}\l|\frac{\partial^3}{\partial x_{i_1}\partial x_{i_2} \partial x_{i_3}}g(x)\r|\leq 1
    \end{align*}
    Then for two random variables $X$ and $Y$ such that $\E\|X\|^2<\infty$ and $\E\|Y\|^2<\infty$ we define
    \begin{equation*}
        \dd_3(X,Y) \coloneqq \sup_{g\in\cH} |\E g(Y) - \E g(X)|.
    \end{equation*}
\end{defn}

\begin{thm}[Central limit theorem]
    Let $d\geq 3$ and consider the projected RGG in the thermodynamic regime where $t\radt^d\to c>0$ or the dense regime where $t\radt^d\to\infty$ as $t$ tends to infinity. Let $S(v_1,v_2; V) \leq s$ almost surely for all $(v_1,v_2)\in V^2_{\neq}$ and define
    \begin{equation*}
        F_t = (F_t^{(1)}, F_t^{(2)}) \coloneqq \l(\frac{\xi_t(L) - \E\xi_t(L)}{t^{7/2}\radt^{2d+2}}, \frac{\stress(G,G_L) - \E\stress(G,G_L)}{t^{3/2}}\r).
    \end{equation*}
    Let $\Sigma_t$ be the covariance matrix of $F_t$. Then
    \begin{equation*}
        \dd_3(F_t, Z_{\Sigma_t}) = \cO_t(t^{-1/2}),
    \end{equation*}
    where $Z_{\Sigma_t}\sim \cN(0,\Sigma_t)$ is a multivariate normal distributed random variable with mean zero and covariance matrix $\Sigma_t$.
    Additionally, if there exists a $2\times 2$ matrix $\Sigma = \lim_{t\to\infty} \Sigma_t$ then $\dd_3(F_t,Z_{\Sigma})\to 0$ as $t\to\infty$, where $Z_\Sigma\sim\cN(0,\Sigma)$. This implies convergence in distribution of $F_t$ to $Z_\Sigma$.
    \label{thm:multivariate CLT}
\end{thm}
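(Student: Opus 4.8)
The plan is to realise both coordinates of $F_t$ as centred Poisson $U$-statistics and to apply the multivariate second-order Poincaré inequality for Poisson functionals of \cite[Theorem 1.1]{Schulte2019}. That result bounds $\dd_3\l(F_t, Z_{\Sigma_t}\r)$ by a finite sum of terms built from the $L^2$- and $L^3$-norms of the first- and second-order add-one-cost (difference) operators $D_z F_t^{(i)}$ and $D^2_{z_1,z_2}F_t^{(i)}$, together with a term comparing $\Sigma_t$ to the true covariance of $F_t$ (which vanishes, since we take $\Sigma_t = \Cov(F_t)$ exactly). Because $\xi_t(L)$ is a $U$-statistic of order four and $\stress(G,G_L)$ one of order two, both lie in the domain of these operators, so the whole problem reduces to estimating difference operators and checking the resulting scaling.

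First I would compute the difference operators explicitly. Adding a point $z$ to $\eta_t$, the quantity $D_z \xi_t(L)$ equals the number of crossings in the projection using $z$ as one of their four endpoints; this is itself an order-three $U$-statistic, while $D^2_{z_1,z_2}\xi_t(L)$ counts crossings using both $z_1$ and $z_2$, an order-two $U$-statistic that is nonzero only when $z_1,z_2$ are geometrically close---either within distance $\delta_t$ in $\R^d$ (if they bound a common edge) or with projections within distance of order $\delta_t$ in $L$ (if they lie on the two different edges of a crossing). For the stress the operators are simpler: $D_z\stress(G,G_L) = \sum_{v\in V}S(z,v)$ and $D^2_{z_1,z_2}\stress(G,G_L) = S(z_1,z_2)$, which is bounded by $s$.

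The main step is to bound the moments $\E|D_z F_t^{(i)}|^p$ and $\E|D^2_{z_1,z_2}F_t^{(i)}|^p$ for small $p$ via the Mecke equation and the same geometric volume estimates that yield $\E\xi_t(L) = \cO_t(t^4\delta_t^{2d+2})$ and, at leading order, $\Var\xi_t(L) = \cO_t(t^7\delta_t^{4d+4})$. Concretely, the expected number of crossings through a fixed point is of order $t^3\delta_t^{2d+2}$; in the thermodynamic regime this diverges, so the mean term dominates each moment, giving $\E|D_z\xi_t(L)|^p = \cO_t\l((t^3\delta_t^{2d+2})^p\r)$, with analogous bounds for the second differences obtained from the closeness constraints above. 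Substituting these into the $\gamma$-terms of the inequality and dividing by the normalizations $t^{7/2}\delta_t^{2d+2}$ and $t^{3/2}$, which are the orders of the respective standard deviations (so that the diagonal entries of $\Sigma_t$ converge to positive constants), every term is $\cO_t(t^{-1/2})$; for example the third-order term $(\Var\xi_t(L))^{-3/2}\int_W \E|D_z\xi_t(L)|^3\, t\,\lambda_d(dz)$ is of order $t^{10}\delta_t^{6d+6}/(t^7\delta_t^{4d+4})^{3/2} = t^{-1/2}$, independently of $\delta_t$. The mixed terms coupling the crossing and stress components are handled by Cauchy--Schwarz between the two families of difference operators. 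I expect the genuine obstacle to be the sharp moment control of the second-order difference operator $D^2_{z_1,z_2}\xi_t(L)$: it splits into a same-edge and a cross-edge contribution with different volume scalings (orders $t^2\delta_t^{d+2}$ and $t^2\delta_t^{2d}$, on supports of different sizes), so one must retain both and track the correlations introduced by the shared index in the $\gamma_1,\gamma_2$ integrals rather than bounding crudely, all the more so because the two functionals have different orders and hence difference operators on very different scales.

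Finally, for the limit $\Sigma = \lim_t \Sigma_t$ I would invoke the explicit first- and second-moment formulas for $\xi_t(L)$ and $\stress(G,G_L)$ and their covariance from \cite{Chimani2018}: after dividing by the chosen normalizations and passing to the limit $t\delta_t^d\to c$, each entry of $\Sigma_t$ converges, defining $\Sigma$. Using the standard smoothing estimate $\dd_3(Z_{\Sigma_t},Z_\Sigma) = \cO_t\l(\|\Sigma_t - \Sigma\|\r)$ for Gaussian vectors under the thrice-differentiable test class $\cH$, together with the triangle inequality $\dd_3(F_t,Z_\Sigma)\leq \dd_3(F_t,Z_{\Sigma_t}) + \dd_3(Z_{\Sigma_t},Z_\Sigma)$ and the bound of the previous paragraph, yields $\dd_3(F_t,Z_\Sigma)\to 0$, whence convergence in distribution of $F_t$ to $Z_\Sigma$.
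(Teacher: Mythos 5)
Your proposal follows essentially the same route as the paper: both apply the multivariate second-order Poincar\'e inequality of Schulte--Yukich \cite[Theorem 1.1]{Schulte2019} with the same normalizations, bound the first- and second-order difference operators of the crossing count and the stress via the Mecke formula and the same geometric volume estimates (including the same split of $D^2_{z_1,z_2}\xi_t(L)$ into the new-edge and distinct-edges contributions), and obtain the limiting matrix $\Sigma$ from the covariance asymptotics of \cite{Chimani2018}. The only minor difference is that you handle the comparison between $\Sigma_t$ and $\Sigma$ by a Gaussian interpolation plus triangle inequality, whereas the paper reads off both statements from the covariance-difference penalty term already present in the Schulte--Yukich bound; the two are equivalent.
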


Conditions on the stress such that the covariance matrix converges are derived in Appendix \ref{ap:variance}.
The rate of convergence to $Z_\Sigma\sim \cN(0,\Sigma)$ depends on how fast the (co)variances converge to their limiting value, which we were not able to derive. In Appendix \ref{sec:cube}, we derive $\dd_3(F_t,Z_\Sigma) = \cO_t(\radt)$ for a special case with $W=[0,1]^d$ and $L=\R^2\times\{0\}^{d-2}$, which is mostly the result of boundary effects. We expect this to be a typical rate of convergence since boundary effects are present in all models.

\begin{rem}[Stress as a $U$-statistic]
    Note that if the stress is a $U$-statistic with $S(v_1,v_2;V) = S(v_1,v_2)<s$ (for example if $\dd_0$ and $\dd_L$ are Euclidean metrics), a uni-variate central limit theorem for the stress also follows immediately from \cite[Theorem 3]{Last2014}, in which case we have
    \begin{equation*}
        \dd_3\l(\frac{\stress(G,G_L) - \E\stress(G,G_L)}{\sqrt{\Var\stress(G,G_L)}}, Z\r) = \cO_t(t^{-1/2}),
    \end{equation*}
    where $Z\sim\cN(0,1)$. By \cite[Corollary 4.3]{schulte_normal_2016}, this convergence also holds in the Kolmogorov distance.

    The result \cite[Theorem 1.1]{Schulte2019} that is applied to prove the central limit theorem can also be used to show convergence in the $\dd_2$-distance if the limiting covariance matrix is positive-definite. For a $2\times 2$ covariance matrix, it is enough to show that $$\bigl(\lim_{t\to\infty}\Cov(F_t^{(1)},F_t^{(2)})\bigr)^2<\lim_{t\to\infty}\mathbb{V} F_t^{(1)} \mathbb{V} F_t^{(2)}.$$
    If $S(v_1,v_2;V) = S(v_1,v_2)<s$, this  inequality quickly follows from an application of the Cauchy-Schwarz inequality in combination with the variance and covariance expressions from \cite{chimani2020Arxiv}. We were not able to show positive-definiteness for general stress functionals.
\end{rem}

The Slivnyak-Mecke formula (see for example \cite{Last_Penrose_2017}) is a well-known formula that is particularly useful in the proofs of both theorems. In the context of the notation used in this paper, this formula can be written as
\begin{equation}
    \E\Big[\sum_{v_1,\dots,v_n\in V^n_{\neq}} f(v_1,\dots,v_n,V)\Big] = t^n\int_{W^n} \E\l[f(v_1,\dots,v_n,V\cup\{v_1,\dots,v_n\})\r]\dd v_1\cdots \dd v_n
    \label{eq:mecke}
\end{equation}
for all measurable functions $f$.

This paper builds on the work done in \cite{Chimani2018}. However, the proofs and derivations from \cite{Chimani2018} are explained more elaborately in the arXiv paper \cite{chimani2020Arxiv}. When results or proofs from the arXiv paper are needed, we refer to that paper instead.

\section{Convergence of crossings to a Poisson point process}
\label{sec:sparse}
In this section, we prove Theorem \ref{thm:point process} by applying \cite[Theorem 3.1]{Decreusefond2016}.
In order to apply this theorem, we need to verify two things: the intensity measure needs to converge to some finite measure and the difference between the expectation and the variance must converge to zero. We first formulate how $\xi_t$ satisfies these conditions and then prove Theorem \ref{thm:point process}. The Lemmas are then proven in Section \ref{sec:proofs lemmas}.

Our first result is on the intensity measure of the crossings in $L$. Compared to the expected crossing number $\bfL_t(L)$ derived by \cite{Chimani2018}, this lemma features an improvement on the error term, here denoted by $g_t(A)$.
\begin{lem}[Intensity measure]
    There exists a term $g_t(A)$ depending on $t$ and $A$ such that
    \begin{align*}
        \bfL_t(A) = \frac{1}{8} c_d t^4\radt^{2d+2} \Bigl(\int_{A} \lambda_{d-2}((v + L^{\perp})\cap W)^2\dd v + g_t(A) \Bigr)
    \end{align*}
    for all Borel-sets $A\subset L$, with $c_d = 8\pi \kappa_{d-2}^2 \bfB\l(3,d/2\r)^2$. The term $g_t$ is uniformly bounded for all $A$, i.e. $|g_t(A)| \leq c_t = \cO_t(\radt)$,
    where $c_t$ is a constant depending on the intensity $t$ and the choice of $W$ and $L$.
    \label{lem:intensity measure}
\end{lem}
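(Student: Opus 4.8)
The plan is to compute $\bfL_t(A)$ exactly via the multivariate Mecke equation and then split off the leading term by a change of variables that turns the crossing point into an explicit integration variable. Since $\xi_t(A)$ is a Poisson $U$-statistic of order four driven by $\eta_t$, whose intensity measure is $t\lambda_d$ on $W$, the Mecke formula (over ordered quadruples) gives
\[
\bfL_t(A)=\frac{t^4}{8}\int_{W^4}\indi{\|v_1-v_2\|\le\delta_t}\,\indi{\|w_1-w_2\|\le\delta_t}\,\indi{[v_1,v_2]|_L\cap[w_1,w_2]|_L\cap A\ne\emptyset}\,\dd(v_1,v_2,w_1,w_2).
\]
(Almost surely two projected segments meet in a single point, so the last indicator encodes both ``they cross'' and ``the crossing lies in $A$''.) I would then decompose $\R^d=L\oplus L^{\perp}$ and write each point as a pair of a planar part in $L\cong\R^2$ and a perpendicular part in $L^{\perp}\cong\R^{d-2}$, e.g.\ $v_1=(p,\alpha)$, $w_1=(r,\gamma)$. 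The edge-length indicators then split as $\|p-q\|^2+\|\alpha-\beta\|^2\le\delta_t^2$, and the crossing condition becomes a condition on the four planar parts alone.

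\smallskip
The core is a two-step substitution. First I replace $v_2,w_2$ by the edge vectors $e_1=v_2-v_1=(u,h)$ and $e_2=w_2-w_1=(u',h')$; this has unit Jacobian and confines each edge vector to the $d$-ball $\|(u,h)\|\le\delta_t$. Second, for fixed edge vectors the crossing equation $p+su=r+s'u'$ with $s,s'\in[0,1]$ lets me trade the two planar base points $(p,r)$ for the crossing point $x=p+su\in L$ together with the fractions $s,s'$. A short determinant computation shows the Jacobian of this map is $|u\wedge u'|$, the area of the parallelogram spanned by the projected edge vectors, and the crossing requirement reduces to $s,s'\in[0,1]$. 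After this substitution $x$ is the outermost variable, which simultaneously exhibits that $\bfL_t$ has a Lebesgue density $\cD_t$ on $L$, i.e.\ $\bfL_t(A)=\int_A\cD_t(x)\,\dd x$, a fact I will use to get uniformity in $A$.

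\smallskip
For the leading term I fix $x$ and write $W_x:=(x+L^{\perp})\cap W$ for the fiber over $x$. Discarding the $\cO(\delta_t)$ planar shifts $su,(1-s)u,s'u',(1-s')u'$, the four membership constraints $v_1,v_2,w_1,w_2\in W$ reduce to the fiber constraints $\alpha,\alpha+h\in W_x$ and $\gamma,\gamma+h'\in W_x$. Integrating out $\alpha$ and $\gamma$ then contributes $\lambda_{d-2}(W_x)$ each, integrating out the perpendicular edge parts $h,h'$ contributes the $(d-2)$-ball volumes $\kappa_{d-2}(\delta_t^2-\|u\|^2)^{(d-2)/2}$ and $\kappa_{d-2}(\delta_t^2-\|u'\|^2)^{(d-2)/2}$, and the $s,s'$ integrals give $1$. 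What remains is the planar integral
\[
\iint_{\|u\|\le\delta_t,\,\|u'\|\le\delta_t}|u\wedge u'|\,(\delta_t^2-\|u\|^2)^{(d-2)/2}(\delta_t^2-\|u'\|^2)^{(d-2)/2}\,\dd u\,\dd u',
\]
which factorizes in polar coordinates into an angular part $\int_0^{2\pi}\!\int_0^{2\pi}|\sin(\phi-\phi')|\,\dd\phi\,\dd\phi'=8\pi$ and a product of identical radial integrals, each a Beta integral of order $\delta_t^{d+1}$. Assembling the angular factor, the two factors $\kappa_{d-2}$, and the radial Beta integrals reproduces the constant $c_d$ and the power $t^4\delta_t^{2d+2}$ of the statement, with the squared fiber volume $\lambda_{d-2}((x+L^{\perp})\cap W)^2$ integrated over $x\in A$.

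\smallskip
The remaining work, and the main obstacle, is the \emph{uniform} error bound. The corrections to the leading density arise from three $\cO(\delta_t)$ effects: the planar shifts evaluate the fiber at a point within $\delta_t$ of $x$ rather than at $x$; the set $W_x\cap(W_x-h)$ over which $\alpha$ ranges differs from $W_x$ by a layer of width $\|h\|\le\delta_t$; and quadruples with a point within distance $\delta_t$ of $\partial W$ fall outside the clean fiber picture. To make the bound uniform over all Borel $A$ I would use the density representation: writing $\cD_t(x)=\tfrac18 c_d t^4\delta_t^{2d+2}\lambda_{d-2}((x+L^{\perp})\cap W)^2+e_t(x)$, one has $g_t(A)=\tfrac{8}{c_d t^4\delta_t^{2d+2}}\int_A e_t(x)\,\dd x$ and therefore $|g_t(A)|\le\tfrac{8}{c_d t^4\delta_t^{2d+2}}\int_L|e_t(x)|\,\dd x$ for every $A$. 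It then suffices to prove $\int_L|e_t(x)|\,\dd x=\cO_t(t^4\delta_t^{2d+3})$. Here the convexity of $W$ is decisive: the boundary layer $W\setminus W_{-\delta_t}$ has volume $\cO(\delta_t)$, and the fibers $(x+L^{\perp})\cap W$ together with their relative boundaries have uniformly controlled measure and surface area, so each of the three effects costs exactly one extra power of $\delta_t$. This yields $|g_t(A)|\le c_t=\cO_t(\delta_t)$ uniformly in $A$, with $c_t$ depending only on $t$ (through $\delta_t$) and on $W$ and $L$, as claimed. The delicate point throughout is to keep every estimate independent of $A$ and to control the fiber geometry uniformly near $\partial W$, rather than the leading-order integral itself, which is routine.
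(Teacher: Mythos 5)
Your approach is correct and reaches the lemma by a route that differs from the paper's in its key technical step. The paper also starts from the Mecke formula and the edge-vector substitution, but it never computes an exact density for $\bfL_t$: in Lemma \ref{lem:intensity measure bounds} it \emph{brackets} $\bfL_t(A)$ between two measures obtained by replacing the fiber volume $\lambda_{d-2}((\cdot+L^{\perp})\cap W)$ with its infimum (over $B_2(x,4\delta_t)$, with $W$ shrunk to $W_{-3\delta_t}$) and its supremum, and it factors out the crossing position by translation invariance --- for a fixed crossing configuration the set of admissible base points $v_1|_L$ has the same measure as $U_x$ --- instead of your change of variables $(p,r)\mapsto(x,s,s')$ with Jacobian $|u\wedge u'|$. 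The leftover configuration integral is then exactly the paper's defining integral for $c_d$, whose evaluation is cited from \cite{chimani2020Arxiv}. Uniformity in $A$ comes from the bracket width $f_t(W_L)=\cO_t(\delta_t)$ of Lemma \ref{lem:uniform error bound}, which is the same mechanism as your bound $|g_t(A)|\le\frac{8}{c_d t^4\delta_t^{2d+2}}\int_L|e_t(x)|\,\dd x$. What your route buys: an explicit Lebesgue density for $\bfL_t$ and a self-contained evaluation of the constant. What the paper's route buys: no disintegration or Jacobian to justify, and all three error sources (planar shifts, fiber-intersection layers, boundary layer) are absorbed at once into the inf/sup bracket. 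Note also that your error analysis --- which you rightly call the main obstacle --- is only sketched; completing it requires exactly the geometric inputs of Lemma \ref{lem:uniform error bound}, namely Brunn's theorem (concavity of fiber volumes) to control the integrated oscillation of the sup/inf envelopes and a surface-area bound for the $\cO_t(\delta_t)$ boundary layer. Nothing in your plan fails, but that is where the substance of the proof lies.

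One discrepancy you should not gloss over: carrying out your polar-coordinate computation, the radial factor is $\int_0^{\delta_t}\rho^2(\delta_t^2-\rho^2)^{(d-2)/2}\,\dd\rho=\frac{1}{2}\delta_t^{d+1}\bfB(3/2,d/2)$, so your leading constant assembles to $2\pi\kappa_{d-2}^2\bfB(3/2,d/2)^2$. This does equal the integral by which the paper \emph{defines} $c_d$ (your parallelogram/Jacobian argument is precisely the $w_3$-integration there), but it does not equal the quoted closed form $8\pi\kappa_{d-2}^2\bfB(3,d/2)^2$: for $d=3$ the former is $\pi^3/8$ while the latter is $32\pi(16/105)^2$. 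So your claim that the Beta integrals ``reproduce the constant of the statement'' is valid for $c_d$ as a configuration integral --- which is all the proof needs --- but not for the closed form cited from \cite{chimani2020Arxiv}; one of the two evaluations is off, and you should flag the mismatch rather than assert agreement.
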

The following bound follows immediately:

\begin{cor}[Total variation distance]
    Let
    $$\bfM(A) = \frac{1}{8} c_d c^2\int_A \lambda_{d-2}((v + L^{\perp})\cap W)^2\dd v$$
    be the measure defined as in Theorem \ref{thm:point process}
    and $t^2\radt^{d+1}\to c$ as $t\to\infty$.
    Then
    \begin{equation*}
        \dd_{TV}(\bfL_t,\bfM) = \sup_{A\in\cB(L)}|\bfL_t(A) - \bfM(A)| = \cO_t(\radt) +\cO_t(c^2 - t^4\radt^{2d+2})
    \end{equation*}
    goes to zero as $t$ tends to infinity.
    \label{lem:tv distance}
\end{cor}

\begin{lem}[Convergence variance to expectation]
    Let $t^2\radt^{d+1}\to c$ as $t\to\infty$. Then,
    \begin{equation*}
        \Var \xi_t(L) - \Ex \xi_t(L) = \begin{cases}
            \cO_t(\sqrt{\radt}) &\text{if } d=2\\
            \cO_t(\radt) &\text{if } d\geq 3
        \end{cases}
    \end{equation*}
    as $t$ tends to infinity.
    \label{lem:convergence var exp}
\end{lem}

The proof of Theorem \ref{thm:point process} now quickly follows.
\begin{proof}[Proof of Theorem \ref{thm:point process}]
    Let $\zeta$ be a Poisson point process with finite intensity measure $\bfM$. From \cite[Theorem 3.1.]{Decreusefond2016}, we know that
    \begin{align*}
        \dd_{K\!R}(\xi_t,\zeta) \leq \dd_{TV}(\mathbf{L}_t, \mathbf{M}) + 2(\E\xi_t(L)^2 - \E\xi_t(L) - (\E\xi_t(L))^2).
    \end{align*}
    Combining this with Corollary \ref{lem:tv distance} and Lemma \ref{lem:convergence var exp}, we obtain the desired result.
\end{proof}

\subsection{Proofs of the Lemmas}
\label{sec:proofs lemmas}
In this section, we prove Lemmas \ref{lem:intensity measure} and \ref{lem:convergence var exp}. Although our intensity measure looks similar to the expected crossing number derived in \cite{Chimani2018}, we use a different approach to derive it for any Borel set $A$. To count all crossings in $L$, it is sufficient to count any crossing resulting from four vertices in $W$, which was used by \cite{Chimani2018} to calculate $\E\xi_t(L)$. Since vertices projected outside $A$ can have crossing edges inside $A$, we need to be more careful in the derivation.

The Poisson point process is homogeneous in $W$ but the projected vertices are generally not homogeneous in $W_L$, which further complicates the proof. In the proof of the lemma below, the expected number of crossings in a sufficiently small Borel set $U\subset L$ is bounded using lower and upper bounds on the intensity of projected vertices near $U$.

\begin{lem}[Bounds on the intensity measure]
    The intensity measure of $\xi_t$ is bounded as follows:
    \begin{align*}
        \frac{1}{8} c_d t^4\radt^{2d+2}\int_{A}\inf_{w\in B_2(v, 2\radt)} \lambda_{d-2}((w + L^{\perp})\cap W_{-\radt})^2\dd v \leq \bfL_t(A) \\
        \leq \frac{1}{8} c_d t^4\radt^{2d+2}\int_{A}\sup_{w\in B_2(v, 2\radt)} \lambda_{d-2}((w + L^{\perp})\cap W)^2\dd v
    \end{align*}
    for all Borel set $A\subset L$.
    \label{lem:intensity measure bounds}
\end{lem}

\begin{proof}
    Let $A\subset L$ be a Borel set. Then,
    \begin{align*}
        \bfL_t(A) &=\E\Big[\frac{1}{8}\sum_{(v_1,...,v_4)\in V^4_{\neq}}
        \begin{multlined}[t]\indi{[v_1,v_3]|_L\cap [v_2,v_4]|_L \cap A \neq \emptyset}\\
            \cdot\indi{\|v_1-v_3\|\leq\radt,\, \|v_2-v_4\|\leq \radt}\Big]
        \end{multlined}
        \\
        &=\frac{1}{8}t^4\int_{W^4}
        \begin{multlined}[t]
            \indi{[v_1,v_3]|_L\cap [v_2,v_4]|_L \cap A \neq \emptyset}\\
            \cdot\indi{\|v_1-v_3\|\leq\radt,\, \|v_2-v_4\|\leq \radt}\dd v_1\cdots\dd v_4,
        \end{multlined}
    \end{align*}
    where the second equality follows from the Slivnyak-Mecke formula \eqref{eq:mecke}.

    For $x\in A$, let $U_x\subset B_2(x,\radt)$ be a non-empty Borel subset of $L$. The set $A$ can be written as the union of such disjoint sets $U_x$. In the proofs of the lower and upper bounds below, we derive a bound on the intensity of crossings in a set $U_x$, $\bfL_t(U_x)$. These bounds can be used to define a sequence of simple functions converging almost everywhere to the integrands from the lemma. The bounds for $\bfL_t(A)$ then follow from an application of the dominated convergence theorem.

    \paragraph{Lower bound}
    The position of a vertex on one end of an edge can be described by its position relative to the other vertex, leading to the following lower bound:
    \begin{multline*}
        \bfL_t(U_x)\geq
        \frac{1}{8}t^4\int_{(W_{-\radt})^2}\int_{(\radt B_d)^2}
            \indi{(v_1 + [0,w_1])|_L\cap (v_2 + [0,w_2])|_L \cap U_x \neq \emptyset}\\
            \dd w_1 \dd w_2 \dd v_1 \dd v_2
        \eqqcolon \text{RHS},
    \end{multline*}
    where $v_1 + [0,w_1] = [v_1, v_1 + w_1]$ is a translation of the line $[0,w_1]$ by $v_1$.
    By restricting $v_1$ and $v_2$ to lie in the inner parallel set $W_{-\radt}$, we ensure that the points $v_1 + w_1$ and $v_2 + w_2$ lie in $W$.
    \begin{figure}
        \centering
        \begin{subfigure}{.45\textwidth}
            \centering
            \includegraphics[width = .6\linewidth]{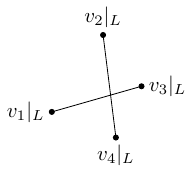}
            \caption{Construction of a crossing in $L$ with 4 vertices}
        \end{subfigure}
        \hfill
        \begin{subfigure}{.45\textwidth}
            \centering
            \includegraphics[width = .8\linewidth]{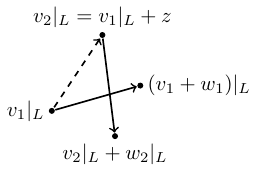}
            \caption{Construction of vertex positions in $L$ relative to $v_1|_L$}
            \label{fig:vertex construction vector}
        \end{subfigure}
        \caption{}
    \end{figure}
    This situation and another transformation coming up later in the proof are depicted in Figure \ref{fig:vertex construction vector}.

    An integral over a subset of $\R^d$ can be split into an integral over $L$ and one over $L^\perp$. Let
    \begin{equation*}
        \mu_{-\radt }(B) = \int_B \lambda_{d-2}((x+L^{\perp})\cap W_{-\radt }) \dd x
    \end{equation*}
    be the volume of $W_{-\radt }$ that gets projected onto a Borel set $B\subset L$. Then, the right-hand side of the inequality above can be expressed as
    \begin{multline*}
        \text{RHS} =  \frac{1}{8}t^4\int_{W_L^2}\int_{(\radt B_d)^2}\indi{(v_1 + [0, w_1]|_L)\cap (v_2 + [0, w_2]|_L) \cap U_x \neq \emptyset}\\
        \dd w_1 \dd w_2 \mu_{-\radt }(\dd v_2) \mu_{-\radt }(\dd v_1).
    \end{multline*}
    Note that the vertices $v_1$ and $v_2$ in this last expression lie in the plane $L$ instead of in $W$.

    If $(v_1 + [0, w_1]|_L)\cap (v_2 + [0, w_2]|_L) \cap U_x \neq \emptyset$ then $\|v_1 - x\| \leq 2 \radt $ and $\|v_2 - x\| \leq 2 \radt$. That is, for the indicator function in our integral to be non-zero, the points $v_1$ and $v_2$ must lie in the set $B_2(x,2 \radt )$. We define for all Borel sets $B\subset L$ the translation invariant measure
    \begin{equation*}
        \mu_-(B) \coloneqq \big(\inf_{w\in B_2(x,2\radt)} \lambda_{d-2}((w + L^{\perp})\cap W_{-\radt})\big)\lambda_2(B).
    \end{equation*}
    For all $B\subset B_2(x, 2\radt)$, we have $\mu_-(B) \leq \mu_{-\radt}(B)$, which implies
    \begin{multline*}
        \bfL_t(U_x) \geq \text{RHS}
        \geq\frac{1}{8}t^4 \int_{W_L^2}\int_{(\radt B_d)^2}
        \indi{(v_1 + [0,w_1]|_L)\cap (v_2 + [0, w_2]|_L) \cap U_x \neq \emptyset}\\
        \dd w_1 \dd w_2 \mu_-(\dd v_2) \mu_-(\dd v_1).
    \end{multline*}

    If the indicator function is non-zero then the position of $v_2\in L$ can be described relative to $v_1\in L$ using a vector $z\in 2\radt B_2$ given by $z = v_2 - v_1 $, see Figure \ref{fig:vertex construction vector}. Then,
    \begin{multline*}
        \bfL_t(U_x)
        \geq \frac{1}{8}t^4\int_{W_L}\int_{2\radt B_2}\int_{(\radt B_d)^2}
        \indi{(v_1 + [0,w_1]|_L)\cap ((v_1 + z) + [0,w_2]|_L) \cap U_x \neq \emptyset}\\
        \dd w_1 \dd w_2 \mu_-(\dd z) \mu_-(\dd v_1).
    \end{multline*}
    Whether or not $(v_1 + [0,w_1]|_L)$ and $(v_1 + z + [0,w_2]|_L)$ intersect is independent of the position of $v_1$. Combining this with an application of Fubini's theorem, we can write
    \begin{multline*}
        \bfL_t(U_x)
        \geq \frac{1}{8}t^4\int_{L}\int_{(\radt B_d)^2}
        \indi{[0,w_1]|_L\cap (z+[0,w_2]|_L)\neq \emptyset}\\
        \int_{W_L}\indi{(v_1 + [0,w_1]|_L)\cap ((v_1 + z) + [0,w_2]|_L) \cap U_x \neq \emptyset}\mu_-(\dd v_1)
        \dd w_1 \dd w_2 \mu_-(\dd z) .
    \end{multline*}
    
    Whenever $[0,w_1]|_L\cap (z+[0, w_2]|_L)\neq\emptyset$, we obtain the following equality by translation invariance of $\mu_-$:
    \begin{multline*}
        \int_{W_L}\indi{(v_1 + [0,w_1]|_L)\cap ((v_1 + z) + [0,w_2]|_L) \cap U_x \neq \emptyset}\mu_-(\dd v_1) =\mu_-(U_x)\\
        = \big(\inf_{w\in B_2(x,2\radt)} \lambda_{d-2}((w + L^{\perp})\cap W_{-\radt})\big)\lambda_2(U_x).
    \end{multline*}
    That is, if the position of the crossing relative to $v_1\in L$ is fixed, then $v_1$ can lie in a set of the same size as $U_x$. Note that if $x$ is close to the boundary of $W_L$ then $\mu_-(U_x) = 0$.
    Plugging this into the expression above, we obtain
    \begin{align*}
        \bfL_t(U_x)
        &\geq \begin{multlined}[t]\frac{1}{8}t^4\big(\inf_{w\in B_2(x,2\radt)} \lambda_{d-2}((w + L^{\perp})\cap W_{-\radt})\big)\lambda_2(U_x)\\\int_{2\radt B_2}
        \int_{(\radt B_d)^2}\indi{[0,w_1]|_L\cap (z + [0, w_2]|_L)\neq\emptyset}\dd w_1 \dd w_2 \mu_-(\dd z)
        \end{multlined}\\
        &= \begin{multlined}[t]\frac{1}{8}t^4\radt^{2d+2}\big(\inf_{w\in B_2(x,2\radt)} \lambda_{d-2}((w + L^{\perp})\cap W_{-\radt})\big)^2\lambda_2(U_x)\\
        \int_{2 B_2}\int_{(B_d)^2}\indi{[0,w_1]|_L\cap (z+[0, w_2]|_L)\neq\emptyset}\dd w_1 \dd w_2 \dd z
        \end{multlined}\\
        &= \frac{1}{8}t^4\radt^{2d+2}c_d\big(\inf_{w\in B_2(x,2\radt)} \lambda_{d-2}((w + L^{\perp})\cap W_{-\radt})\big)^2\lambda_2(U_x),
    \end{align*}
    where the calculation of $$c_d\coloneqq \int_{2 B_2}\int_{(B_d)^2}\indi{[0,w_1]|_L\cap (z+[0, w_2]|_L)\neq\emptyset}\dd w_1 \dd w_2 \dd z = 8\pi \kappa_{d-2}^2 \bfB\l(3,\frac{d}{2}\r)^2$$ is done in \cite{chimani2020Arxiv}.

    \paragraph{Upper bound} The proof of the upper bound is similar to the one of the lower bound, but for a supremum instead of an infimum. Boundary effects can be ignored since counting more edges is allowed for the upper bound. Then,
    \begin{multline*}
        \bfL_t(U_x)\leq \frac{1}{8}t^4\int_{W^2}\int_{(\radt B_d)^2}\indi{(v_1 + [0,w_1])|_L\cap (v_2 + [0,w_2])|_L \cap U_x \neq \emptyset}
        \\
        \dd w_1 \dd w_2 \dd v_1 \dd v_2.
    \end{multline*}
    Defining for all Borel sets $B\subset L$ the measure
    \begin{align*}
        \mu(B) \coloneqq \int_B \lambda_{d-2}((x+L^{\perp})\cap W)\dd x,
    \end{align*}
    we can transform our integrals to ones over the plane $L$:
    \begin{multline*}
        \bfL_t(U_x) \leq \frac{1}{8}t^4\int_{W_L^2}\int_{(\radt B_d)^2}\\
        \indi{(v_1 + [0,w_1])|_L\cap (v_2 + [0,w_2])|_L \cap U_x \neq \emptyset}\dd w_1 \dd w_2 \mu(\dd v_2) \mu(\dd v_1).
    \end{multline*}

    Let $z=v_2-v_1$ be the position of $v_2\in L$ relative to $v_1\in L$. Then,
    \begin{multline*}
        \bfL_t(U_x)\leq \frac{1}{8}t^4\int_{W_L}\int_{2\radt B_2}\int_{(\radt B_d)^2}\\
        \indi{(v_1 + [0,w_1])|_L\cap (v_1 + z + [0,w_2])|_L \cap U_x \neq \emptyset}\dd w_1 \dd w_2 \mu(\dd z) \mu(\dd v_1).
    \end{multline*}
    
    Let
    \begin{align*}
        \mu_+(B) \coloneqq \big(\sup_{w\in B_2(x,2\radt)} \lambda_{d-2}((w + L^{\perp})\cap W)\big)\lambda_2(B).
    \end{align*}
    For all $B\subset B_2(x,2\radt)$ we have $\mu_+(B) \geq \mu(B)$
    and therefore
    \begin{multline*}
        \begin{multlined}[t]
        \bfL_t(U_x) \leq
            \frac{1}{8}t^4\int_{W_L}\int_{B_2(v_1, 2\radt)}\int_{(\radt B_d)^2}\\
            \indi{(v_1 + [0,w_1])|_L\cap (v_1+z + [0,w_2])|_L \cap U_x \neq \emptyset}\dd w_1 \dd w_2 \mu_+(\dd z) \mu_+(\dd v_1)
        \end{multlined}
        \\
        =\begin{multlined}[t]\frac{1}{8}t^4\int_{2\radt B_2}\int_{(\radt B_d)^2}\indi{[0,w_1]|_L\cap (z + [0, w_2])|_L\neq\emptyset}\\
        \int_{W_L}\indi{(v_1 + [0,w_1])|_L\cap (v_1 + z + [0,w_2])|_L \cap U_x \neq \emptyset}\mu_+(\dd v_1)\dd w_1 \dd w_2 \mu_+(\dd z).
        \end{multlined}
    \end{multline*}
    Whenever $[0,w_1]|_L\cap (z + [0, w_2])|_L\neq\emptyset$, the translation invariance of $\mu_+$ implies
    \begin{multline*}
        \int_{W_L}\indi{(v_1 + [0,w_1])|_L\cap (v_1 + z + [0,w_2])|_L \cap U_x \neq \emptyset}\mu_+(\dd v_1)=\mu_+(U_x)\\
        = \big(\sup_{w\in B_2(x,2\radt)} \lambda_{d-2}((w + L^{\perp})\cap W)\big)\lambda_2(U_x).
    \end{multline*}
    Plugging this into the expression above, we obtain
    \begin{align*}
        \bfL_t(U_x)
        &\leq \frac{1}{8}t^4\radt^{2d+2}c_d\big(\sup_{w\in B_2(x,2\radt)} \lambda_{d-2}((w + L^{\perp})\cap W)\big)^2\lambda_2(U_x).
    \end{align*}
    The bound for $\bfL_t(A)$ with a Borel set $A\subset L$ follows from an application of the dominated convergence theorem as described at the start of this proof.
\end{proof}

The value of $\bfL_t(A)$ can be approximated using the lower and upper bounds from Lemma \ref{lem:intensity measure bounds}. Using the difference between these bounds, we can determine the maximum error of this estimation.
\begin{lem}[Uniform error bound]
    Let
    \begin{equation*}
        f_t(A) \coloneqq \int_{A}\sup_{w\in B_2(v, 2\radt)} \lambda_{d-2}((w + L^{\perp})\cap W)^2 - \inf_{w\in B_2(v, 2\radt)} \lambda_{d-2}((w + L^{\perp})\cap W_{-\radt})^2\dd v.
    \end{equation*}
    Then,
    \begin{align*}
        0 \leq f_t(A) \leq f_t(L) = \cO_t(\radt)
    \end{align*}
    for all Borel sets $A\subset L$.
    \label{lem:uniform error bound}
\end{lem}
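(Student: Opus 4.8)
The plan is to first reduce the statement to the single quantity $f_t(W_L)$ and then to split the integrand into an oscillation part and a shell part. Write $\phi(v)\coloneqq\lambda_{d-2}((v+L^{\perp})\cap W)$ and $\psi(v)\coloneqq\lambda_{d-2}((v+L^{\perp})\cap W_{-3\delta_t})$; both are defined for every $v\in L$ and vanish outside $W_L$. With $B\coloneqq B_2(v,4\delta_t)$ the integrand of $f_t$ is $g_t(v)=\sup_{w\in B}\phi(w)^2-\inf_{w\in B}\psi(w)^2$. Since $W_{-3\delta_t}\subseteq W$ we have $\psi\le\phi$ pointwise, so choosing $w_\ast\in\overline B$ with $\psi(w_\ast)^2=\inf_B\psi^2$ gives $\inf_B\psi^2=\psi(w_\ast)^2\le\phi(w_\ast)^2\le\sup_B\phi^2$, i.e. $g_t\ge0$. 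Hence $0\le f_t(A)=\int_A g_t\,\dd v\le\int_{W_L}g_t\,\dd v=f_t(W_L)$ for every Borel $A\subseteq W_L$, and it remains to prove $f_t(W_L)=\cO(\delta_t)$. Observe that $\phi\le M$ for a constant $M=M(W,L,d)$, because each section is a convex subset of the bounded set $W$, and that $\phi$ and $W_L$ do not depend on $t$. Next split $g_t=O_t+D_t$ into the oscillation term $O_t(v)\coloneqq\sup_B\phi^2-\inf_B\phi^2\ge0$ and the shell term $D_t(v)\coloneqq\inf_B\phi^2-\inf_B\psi^2\ge0$ (nonnegative because $\psi\le\phi$), and bound $\int_{W_L}O_t\,\dd v$ and $\int_{W_L}D_t\,\dd v$ separately, each by $\cO(\delta_t)$.

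The oscillation term is the main obstacle. A pointwise Lipschitz bound on $\phi^2$ is not available: near $\partial W_L$ the section volume may have unbounded slope — for a Euclidean ball $\phi$ behaves like a square root — and estimating $O_t$ through $\sup_B|\nabla\phi^2|$ only yields a spurious $\cO(\delta_t\log(1/\delta_t))$. The structural fact that saves the argument is that, by the Brunn--Minkowski inequality, $\phi^{1/(d-2)}$ is concave on the convex body $W_L$; this is the one place where $d\ge3$ is used, since for $d=2$ the sections are $0$-dimensional. Concavity makes every superlevel set $\{\phi^2>\tau\}=\{\phi>\sqrt\tau\}$, $\tau>0$, a convex subset of $W_L$, hence of perimeter at most $\operatorname{Per}(W_L)$. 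By the co-area formula the total variation of $\phi^2$ (which vanishes outside $W_L$) is then $|D(\phi^2)|(\R^2)=\int_0^{M^2}\operatorname{Per}(\{\phi^2>\tau\})\,\dd\tau\le M^2\operatorname{Per}(W_L)$, a finite constant independent of $t$. The standard oscillation estimate for functions of bounded variation, $\int_{\R^2}(\sup_{B_2(v,r)}F-\inf_{B_2(v,r)}F)\,\dd v\le C\,r\,|DF|(\R^2)$ (proved by reduction to one-dimensional sections), applied with $F=\phi^2$ and $r=4\delta_t$, gives $\int_{W_L}O_t\,\dd v\le 4C\,\delta_t\,M^2\operatorname{Per}(W_L)=\cO(\delta_t)$.

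For the shell term, since $\phi,\psi\le M$ and $\phi\ge\psi$, choosing $w_\ast$ as above gives $D_t(v)\le\phi(w_\ast)^2-\psi(w_\ast)^2\le 2M\sup_{w\in B}(\phi(w)-\psi(w))$, where $\phi(w)-\psi(w)=\lambda_{d-2}((w+L^{\perp})\cap(W\setminus W_{-3\delta_t}))$ is the section volume of the boundary shell $W\setminus W_{-3\delta_t}$. A Steiner-type estimate inside the section shows $\phi-\psi=\cO(\delta_t)$ uniformly on the interior region $\{w\colon\operatorname{dist}(w,\partial W_L)>c\delta_t\}$, where convexity keeps the sections transversal to $\partial W$ with a uniformly controlled cross-section; on the complementary boundary strip I use only $\phi-\psi\le M$, and this strip has Lebesgue measure $\cO(\delta_t)$ because $W_L$ is a planar convex body. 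Since $B_2(v,4\delta_t)$ lies in the interior region as soon as $\operatorname{dist}(v,\partial W_L)>(c+4)\delta_t$, splitting the integral over these two regions yields $\int_{W_L}D_t\,\dd v=\cO(\delta_t)\cdot\lambda_2(W_L)+M\cdot\cO(\delta_t)=\cO(\delta_t)$.

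Adding the two bounds gives $f_t(W_L)=\cO(\delta_t)$, which together with the first paragraph proves the lemma. I expect the only genuinely delicate points to be, in the oscillation part, the invocation of the BV oscillation estimate together with the co-area bound on $|D(\phi^2)|$ (the quasi-concavity of $\phi$ and hence convexity of the superlevel sets of $\phi^2$ being the crucial, problem-specific input), and, in the shell part, the uniform interior Steiner bound, which compares intra-section distance with the ambient distance to $\partial W$ and degenerates precisely as $w\to\partial W_L$ — which is exactly why the boundary strip is treated separately.
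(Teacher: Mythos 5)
Your reduction $0\le f_t(A)\le f_t(W_L)$ and your treatment of the oscillation term are sound, and the latter is in fact a more rigorous rendering of what the paper itself does: the paper asserts that $h=\phi^2$ is concave by Brunn's theorem (which is false in general -- Brunn gives concavity of $\phi^{1/(d-2)}$, hence only quasi-concavity of $\phi^2$) and then bounds $\int (h_+-h_-)$ by ``$8\delta_t$ times the surface area of the graph''; your route via convex superlevel sets, the co-area formula and the tube/Steiner estimate is exactly the honest version of that argument. The genuine gap is in your shell term. The claimed uniform bound $\phi-\psi=\cO(\delta_t)$ on $\{w\colon \operatorname{dist}(w,\partial W_L)>c\delta_t\}$ is false for general convex $W$: transversality of the sections to $\partial W$ does not fail only inside an $\cO(\delta_t)$-strip, it degenerates \emph{gradually} as $w$ approaches $\partial W_L$. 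Take $d=3$, $W$ the unit ball and $L=\R^2\times\{0\}^{1}$, so that $\phi(w)=2\sqrt{1-\|w\|^2}$ and $W_{-3\delta_t}=(1-3\delta_t)B_3$. At $\|w\|=1-c\delta_t$ one computes
\begin{equation*}
    \phi(w)-\psi(w) \,=\, 2\sqrt{1-(1-c\delta_t)^2}-2\sqrt{(1-3\delta_t)^2-(1-c\delta_t)^2}
    \,=\, \frac{6\sqrt{2\delta_t}}{\sqrt{c}+\sqrt{c-3}}\bigl(1+\cO_t(\delta_t)\bigr),
\end{equation*}
which is of order $\sqrt{\delta_t}$, not $\delta_t$; more generally $\phi-\psi\asymp \delta_t/\sqrt{s}$ at distance $s\in[3\delta_t,\mathrm{const}]$ from $\partial W_L$. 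So the splitting ``uniform $\cO(\delta_t)$ inside, trivial bound on a strip of measure $\cO(\delta_t)$'' does not close, and no choice of the constant $c$ repairs it.

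The lemma itself is safe, because only the \emph{integral} of the shell term matters, and that can be controlled without any pointwise interior bound. By Fubini, $\int_L(\phi-\psi)\,\dd v=\lambda_d(W)-\lambda_d(W_{-3\delta_t})\le 3\delta_t\,\lambda_{d-1}(\partial W)$. Combining this with $\sup_{B}(\phi-\psi)\le(\phi-\psi)(v)+\operatorname{osc}_B\phi+\operatorname{osc}_B\psi$, where $B=B_2(v,4\delta_t)$, and applying your own BV/co-area oscillation estimate to $\phi$ and to $\psi$ (both are quasi-concave with convex superlevel sets contained in $W_L$, so their total variations are bounded by $M\lambda_1(\partial W_L)$ uniformly in $t$), you get
\begin{equation*}
    \int_{W_L} D_t(v)\,\dd v \,\le\, 2M\Bigl(\lambda_d(W)-\lambda_d(W_{-3\delta_t})
    +\int_{L}\operatorname{osc}_{B}\phi\,\dd v+\int_{L}\operatorname{osc}_{B}\psi\,\dd v\Bigr)\,=\,\cO_t(\delta_t),
\end{equation*}
which closes the argument with tools you have already set up. For what it is worth, the paper's proof of this step is also not watertight: its pointwise bound $\lambda_{d-2}((w+L^{\perp})\cap(W\setminus W_{-3\delta_t}))\le 3\delta_t\sup_{v\in W_L}\lambda_{d-3}(\partial((v+L^\perp)\cap W))$, claimed for all $w\in W_L$, is contradicted by the same ball example (and by $w$ near $\partial W_L$ for the cube); so your instinct to excise a boundary strip was the right one -- the point you missed is that the region where pointwise control fails is not of width $\cO(\delta_t)$, and there one must pass to integral estimates.
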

\begin{proof}
    The integrand is non-negative for all points $v\in L$. Since $A\subset L$, it follows that $f_t(A)\leq f_t(L)$.
    
To show the order of the upper bound, we start by bounding $\lambda_{d-2}((w + L^{\perp})\cap W_{-\radt})^2$ to get rid of the $-\radt$. Note that
\begin{align*}
    \lambda_{d-2}((w + L^{\perp})\cap W_{-\radt})^2 &=
    \l(\lambda_{d-2}((w + L^{\perp})\cap W) - \lambda_{d-2}((w + L^{\perp})\cap (W\setminus W_{-\radt}))\r)^2\\
    &\geq \begin{multlined}[t]
    \lambda_{d-2}((w + L^{\perp})\cap W)^2 \\- 2 \lambda_{d-2}((w + L^{\perp})\cap W)  \cdot \lambda_{d-2}((w + L^{\perp})\cap (W\setminus W_{-\radt}))
    \end{multlined}
\end{align*}

For a set $A$, let $\partial A$ denote its boundary. Then, the volume of $W\setminus W_{-\radt}$ contained above $w$ can be bounded as follows:
\begin{align*}
    \lambda_{d-2}((w + L^{\perp})\cap (W\setminus W_{-\radt})) &\leq \radt \lambda_{d-3}(\partial((w+L^\perp)\cap W))\\
    &\leq \radt \sup_{v\in W_L}\lambda_{d-3}(\partial((v+L^\perp)\cap W)).
\end{align*}
Defining the constant
\begin{align*}
    b \coloneqq 2
    \big(\sup_{v\in W_L}\lambda_{d-3}(\partial((v+L^\perp)\cap W))\big)
    \big(\sup_{v\in W_L} \lambda_{d-2}((v + L^{\perp})\cap W)\big),
\end{align*}
we see that
\begin{align*}
    \lambda_{d-2}((w + L^{\perp})\cap W_{-\radt})^2 &\geq \lambda_{d-2}((w + L^{\perp})\cap W)^2 - b\radt
\end{align*}
for all $w\in W_L$.

    For $v\in L$, let
    \begin{align*}
        h(v) &\coloneqq \lambda_{d-2}((v + L^{\perp})\cap W)^2,\\
        h_+(v) &\coloneqq \sup_{w\in B_2(v, 2\radt)} \lambda_{d-2}((w + L^{\perp})\cap W)^2,\\
        h_-(v) &\coloneqq \inf_{w\in B_2(v, 2\radt)} \lambda_{d-2}((w + L^{\perp})\cap W)^2.
    \end{align*}
    Note that $h_+$ is supported on a subset of $L$ containing $W_L$. Hence,
    \begin{align}
        f_t(L) &\leq \int_{W_L} h_+(v) - h_-(v) + b\radt\, \dd v + \int_{L\setminus W_L} h_+(v)\, \dd v\\
        &= b\radt\lambda_2(W_L) + \int_{L} h_+(v) - h(v)\, \dd v + \int_{L} h(v) - h_-(v)\, \dd v.
        \label{eq:two remaining integrals}
    \end{align}
    These integrals represent the differences between the volumes under the surface defined by $h$, $h_+$, and $h_-$.
    For $t > 0$, let
    \begin{align*}
        C(z) &= \{v\in L \colon \, h(v)\geq z\},\\
        C_+(z) &= \{v\in L \colon \, h_+(v)\geq z\}
    \end{align*}
    be the superlevel sets of $h$ and $h_+$ respectively. If $h_+(v) = z$ for some $v\in L$ and $z\in\R$ then there must exist a point $w\in B_2(v, 2\radt)$ such that $h(w) = z$ too. Combining this with $h(v) \leq h_+(v)$ for all $v\in L$, we obtain $C(z) \subset C_+(z) \subset C(z) \oplus B_2(v,2\radt)$. By convexity of $W$, the function $\lambda_{d-2}((v + L^{\perp})\cap W)^{\frac{1}{d-2}} = h(v)^{\frac{1}{2(d-2)}}$ is concave, which implies that $C(z)$ is convex for all $z\in \R_+$. We can therefore apply the Steiner formula to see that $\lambda_2(C_+(z)\setminus C(z)) = \cO(\radt)$ for all $z$. Then,
    \begin{equation*}
        \int_{L} h_+(v) - h(v)\, \dd v = \int_0^\infty \lambda_2(C_+(z)\setminus C(z))\dd z = \cO_t(\radt)
    \end{equation*}
    is the order of the first integral in \eqref{eq:two remaining integrals}. Similar calculations show that $\int_L h(v) - h_-(v)\, \dd v = \cO_t(\radt)$ too, from which we conclude that $f_t(L) = \cO_t(\radt)$.

\begin{figure}
    \centering
    \includegraphics[width = .6\textwidth]{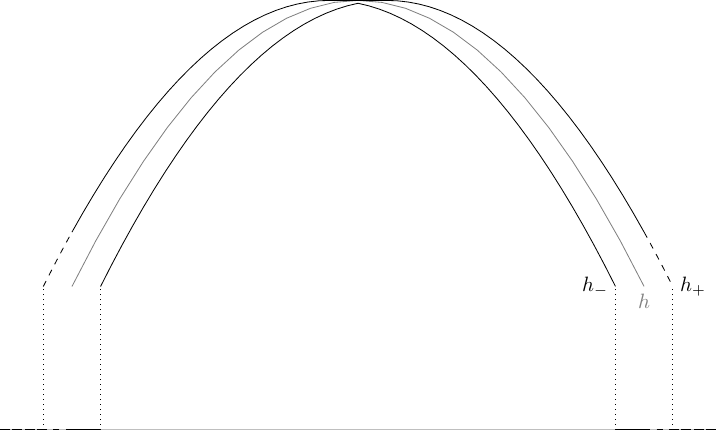}
    \caption{Two-dimensional representation of a function $h(v)$ (gray), the infimum $h_-(v)$
    and the supremum $h_+(v)$.
    Solid lines indicate the domain $W_L$ and dashed lines the domain $L\setminus W_L$.}
    \label{fig:h clarification}
\end{figure}
    
\end{proof}

We are now ready to prove Lemma \ref{lem:intensity measure}.
\begin{proof}[{\bf Proof of Lemma \ref{lem:intensity measure}}]
    Note that
    \begin{multline*}
        \inf_{w\in B_2(v, 2\radt)} \lambda_{d-2}((w + L^{\perp})\cap W_{-\radt})^2 \leq \lambda_{d-2}((v + L^{\perp})\cap W)^2\\
        \leq \sup_{w\in B_2(v, 2\radt)} \lambda_{d-2}((w + L^{\perp})\cap W)^2
    \end{multline*}
    for all $v\in L$.
    Combining  this with Lemma \ref{lem:intensity measure bounds}, the following inequalities follow:
    \begin{multline*}
        \frac{1}{8} c_d t^4\radt^{2d+2}\l(\int_{A} \lambda_{d-2}((v + L^{\perp})\cap W)^2\dd v - f_t(A)\r) \leq \bfL_t(A) \\
        \leq \frac{1}{8} c_d t^4\radt^{2d+2}\l(\int_{A} \lambda_{d-2}((v + L^{\perp})\cap W)^2\dd v + f_t(A)\r)
    \end{multline*}
    for all Borel sets $A\subset L$. We can therefore define $g_t$ to be the function such that
    \begin{align*}
        \bfL_t(A) = \frac{1}{8} c_d t^4\radt^{2d+2} \Bigl(\int_{A} \lambda_{d-2}((w + L^{\perp})\cap W)^2\dd v  + g_t(A) \Bigr).
    \end{align*}
    Combining this with Lemma \ref{lem:uniform error bound}, $|g_t(A)| \leq f_t(A) \leq f_t(L) = \cO_t(\radt)$.
    Defining $c_t \coloneqq f_t(L)$, the result follows.
\end{proof}

Lastly, we prove that the difference between the variance and the expectation converges to zero as $t$ tends to infinity.
\begin{proof}[{\bf Proof of Lemma \ref{lem:convergence var exp}}]
    For this proof, we can mostly repeat the arguments of \cite[Theorem 13]{chimani2020Arxiv}, which we will shorten here. Writing
    \begin{equation*}
        f(v_1,\dots,v_4) = \indi{[v_1,v_3]|_L\cap [v_2,v_4]|_L \neq \emptyset}\indi{\|v_1-v_3\|\leq\radt,\, \|v_2-v_4\|\leq \radt},
    \end{equation*}
    the variance of the crossing number is given by
    \begin{equation*}
        \Var \xi_t(L) = \frac{1}{64} \E \sum_{\substack{(v_1,\dots,v_4)\in V^4_{\neq}\\(w_1,\dots,w_4)\in V^4_{\neq}\\ |\{v_1,\dots,v_4\}\cap \{w_1,\dots,w_4\}|\geq 1}} f(v_1,\dots,v_4)f(w_1,\dots,w_4).
    \end{equation*}
    The sum over all quadruples such that $\{v_1,\dots,v_4\}=\{w_1,\dots,w_4\}$ yields the expected crossing number. The other terms are also calculated by \cite{chimani2020Arxiv} and by the last line of their proof, we have
    \begin{align*}
        \Var\xi_t(L) = \cO_t (t^7 \radt^{4d+4}) + \cO_t (t^6 \radt^{3d+4}) + \cO_t (t^6 \radt^{4d+2}) +  \cO_t (t^5 \radt^{3d+2}) + \E\xi_t(L).
    \end{align*}
    Plugging in $t = \cO_t(\radt^{-(d+1)/2})$ leads to
    \begin{equation*}
        \Var\xi_t(L) = \cO_t (\radt^{(d+1)/2})  + \cO_t (\radt) + \cO_t(\radt^{d-1}) +  \cO_t (\radt^{(d-1)/2}) + \E\xi_t(L).
    \end{equation*}
    For $d\geq 3$, the $\cO_t(\radt)$-term is the largest one. For $d=2$, the largest term is $\cO_t(\sqrt{\radt})$.
\end{proof}

\section{Multivariate central limit theorem}
\label{sec:multivariate clt}
The central limit theorem concerns the thermodynamic and dense regimes, where $t\radt^d\geq c>0$ for all $t>0$. In this case, we can apply the results by \cite{Chimani2018} or the improvements on these results published only in the arXiv version \cite{chimani2020Arxiv} to scale the crossing number and stress.

We assume that $S(v_1,v_2;V)\leq s$ almost surely for all $(v_1,v_2)\in V^2_{\neq}$. Combining this with \cite[Theorems 13, 21 and 23]{chimani2020Arxiv} and their extensions to more general stress functionals in Appendix \ref{ap:variance}, we know that $$\Var \xi_t(L) = \cO_t(t^7 \radt^{4d+4}),\,\quad \Var (\stress(G,G_L)) = \cO_t(t^3), \text{ and}$$ $$\Cov(\xi_t(L),\stress(G,G_L)) = \cO_t(t^5 \radt^{2d+2}).$$
Note that $t^5\radt^{2d+2} = (t^3 t^7\radt^{4d+4})^{1/2}$.
We therefore scale the crossing number and the stress as follows:
\begin{equation*}
    F_t^{(1)} = \frac{\xi_t(L) - \E \xi_t(L)}{t^{7/2}\radt^{2d+2}},\quad
    F_t^{(2)} = \frac{\stress(G,G_L) - \E \stress(G,G_L)}{t^{3/2}}.
\end{equation*}

To prove the multivariate central limit theorem, we apply \cite[Theorem 1.1]{Schulte2019}, which requires first and second order difference operators. For a Poisson functional $F=f(\eta)$, where $\eta$ is a Poisson point process, these operators are defined as
\begin{align*}
    D_x F &= f(\eta + \delta_x) - f(\eta), \qquad
    D_{x,y}^2 F = f(\eta + \delta_x + \delta_y) - f(\eta + \delta_y) - f(\eta + \delta_x) + f(\eta).
\end{align*}
We need to find sufficiently sharp upper bounds for the following expressions from \cite[Theorem 1.1]{Schulte2019}:
\begin{align*}
    \gamma_1&\coloneqq
    \begin{multlined}[t]
        t^{3/2}\Bigg(\sum_{i,j=1}^2\int_{W^3} \l(\E (D_{x_1,x_3}^2 F_t^{(i)})^2 (D_{x_2,x_3}^2 F_t^{(i)})^2\r)^{1/2} \\
        \times \l(\E (D_{x_1}F_t^{(j)})^2 (D_{x_2}F_t^{(j)})^2\r)^{1/2}
        \dd x_1 \dd x_2 \dd x_3\Bigg)^{1/2},
    \end{multlined}\\
    \gamma_2 &\coloneqq 
    \begin{multlined}[t]
        t^{3/2}\Bigg(\sum_{i,j=1}^2\int_{W^3} \l(\E (D_{x_1,x_3}^2 F_t^{(i)})^2 (D_{x_2,x_3}^2 F_t^{(i)})^2\r)^{1/2} \\
        \times \l(\E (D_{x_1,x_3}^2 F_t^{(j)})^2 (D_{x_2,x_3}^2 F_t^{(j)})^2\r)^{1/2}
        \dd x_1 \dd x_2 \dd x_3\Bigg)^{1/2},
    \end{multlined}\\
    \gamma_3 &\coloneqq t\sum_{i=1}^2\int_W \E|D_x F_t^{(i)}|^3 \dd x.
\end{align*}
The Slivnyak-Mecke formula \eqref{eq:mecke} is particularly useful when calculating such expectations of difference operators.

We first calculate upper bounds for these expectations of the difference operators. For the stress functional $F_t^{(2)}$ we can derive sufficiently sharp upper bounds by only considering the maximal stress.
\begin{lem}[Bounds for stress]
    \label{lem:bounds stress}
    If $S(v_1,v_2;V)\leq s <\infty$ almost surely for all $v_1$ and $v_2$ then there exist constants $k_1$, $k_2$ and $k_3$ such that
    \begin{align}
        \label{eq:diff op 1 stress}
        \sup_{x_1,x_2\in W} \E (D_{x_1}F_t^{(2)})^2 (D_{x_2}F_t^{(2)})^2 &\leq k_1 t^{-2},\\
        \label{eq:diff op 1 stress 3rd power}
        \sup_{x\in W} \E|D_x F_t^{(2)}|^3 &\leq k_2 t^{-3/2},\\
        \label{eq:diff op 2 stress}
        \sup_{x_1,x_2,x_3 \in W}\E (D_{x_1,x_3}^2 F_t^{(2)})^2 (D_{x_2,x_3}^2 F_t^{(2)})^2 &\leq k_3 t^{-6},
    \end{align}
    for $t$ large enough.
\end{lem}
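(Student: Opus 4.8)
The plan is to compute the first- and second-order difference operators of the (unscaled) stress functional explicitly, to observe that each admits a simple pointwise bound, and then to reduce all three moment estimates to moments of a single Poisson random variable. Writing $F=\stress(G,G_L)$ as a functional of $\eta_t$ and using that $S$ is symmetric, adding one point $x$ to the configuration creates exactly the ordered pairs $(x,v)$ and $(v,x)$ for each existing vertex $v\in V$, so that
\begin{equation*}
    D_x F = \sum_{v\in V} S(x,v),
\end{equation*}
while adding two points $x,y$ and forming the alternating second difference cancels every contribution except the pair $\{x,y\}$, giving the deterministic expression $D^2_{x,y}F = S(x,y)$. Dividing by $t^{3/2}$ yields $D_xF_t^{(2)} = t^{-3/2}\sum_{v\in V}S(x,v)$ and $D^2_{x,y}F_t^{(2)} = t^{-3/2}S(x,y)$.

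With these formulas the second-order bound \eqref{eq:diff op 2 stress} is immediate. Since $0\leq S\leq s$ on all of $W^2$,
\begin{equation*}
    \E\bigl(D^2_{x_1,x_3}F_t^{(2)}\bigr)^2\bigl(D^2_{x_2,x_3}F_t^{(2)}\bigr)^2 = t^{-6}\,S(x_1,x_3)^2 S(x_2,x_3)^2 \leq s^4 t^{-6},
\end{equation*}
so $k_3=s^4$ works; the favorable rate $t^{-6}$ (rather than $t^{-2}$) is precisely a consequence of the second difference being deterministic.

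For the two first-order estimates I would exploit that the only randomness in $D_xF$ is carried by the vertex count. Setting $N\coloneqq\eta_t(W)\sim\Poi(t)$ and using $0\leq S(x,v)\leq s$ uniformly, we have $0\leq\sum_{v\in V}S(x,v)\leq sN$ for every $x$, hence $\bigl(D_xF_t^{(2)}\bigr)^2\leq s^2N^2t^{-3}$ and $\bigl|D_xF_t^{(2)}\bigr|^3\leq s^3N^3t^{-9/2}$. This collapses the desired estimates to the moments of a Poisson variable: with $\E N^3 = t^3+3t^2+t$ and $\E N^4 = t^4+6t^3+7t^2+t$, which are $\cO_t(t^3)$ and $\cO_t(t^4)$ respectively, we obtain
\begin{equation*}
    \E\bigl(D_{x_1}F_t^{(2)}\bigr)^2\bigl(D_{x_2}F_t^{(2)}\bigr)^2 \leq s^4 t^{-6}\,\E N^4 \leq k_1 t^{-2}
\end{equation*}
and $\E|D_xF_t^{(2)}|^3\leq s^3t^{-9/2}\,\E N^3\leq k_2 t^{-3/2}$, which are exactly \eqref{eq:diff op 1 stress} and \eqref{eq:diff op 1 stress 3rd power}.

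I do not expect a genuine obstacle here, since the stress is a bounded pairwise interaction and everything reduces to Poisson moments; the only care needed is the combinatorial bookkeeping for the difference operators (the factor $1/2$ and the symmetrization over ordered pairs), and in particular confirming that $D^2_{x,y}F$ is genuinely independent of $\eta$. One could instead evaluate $\E(D_{x_1}F)^2(D_{x_2}F)^2$ exactly through the multivariate Mecke formula, but since the lemma only asks for an order bound the crude domination by $sN$ is simpler and suffices.
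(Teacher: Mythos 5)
Your proposal is correct and follows essentially the same route as the paper: compute $D_xF_t^{(2)}=t^{-3/2}\sum_{v\in V}S(x,v)$ and the deterministic $D^2_{x,y}F_t^{(2)}=t^{-3/2}S(x,y)$, dominate the first by $s\,|V|\,t^{-3/2}$ with $|V|\sim\Poi(t)$, and conclude via the Poisson moment bound $\E|V|^n=\cO_t(t^n)$. The only difference is that you spell out the exact Poisson moments and the cancellation argument for the second difference, which the paper states without computation.
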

\begin{proof}
    For any $x\in W$, the first-order difference operator yields
    \begin{equation*}
        0 \leq D_{x} F_t^{(2)} = \sum_{v\in V} \frac{S(x,v;V)}{t^{3/2}} \leq |V|\frac{s}{{t^{3/2}}},
    \end{equation*}
    where $s$ is the upper bound for $S(x,v;V)$ and $|V|$ is the number of vertices.
    For a random variable $X\sim\Poi(t)$ we have $\E X^n = \cO(t^n)$ as $t$ goes to infinity.
    The bounds \eqref{eq:diff op 1 stress} and \eqref{eq:diff op 1 stress 3rd power} follow.

    The second-order difference operator only depends on the two points $x_1$ and $x_2$ added to the point process, so
    \begin{equation*}
        D^2_{x_1,x_2} F_t^{(2)} = \frac{S(x_1,x_2;V)}{t^{3/2}} \leq \frac{s}{t^{3/2}},
    \end{equation*}
    which results in the bound \eqref{eq:diff op 2 stress}.
\end{proof}
The first-order difference operators for the number of crossings are of the same order as those of the stress:
\begin{lem}[Bounds for number of crossings]
    \label{lem:bounds crossings}
    Let $t\radt^d\geq c>0$ for all $t$. Then there exist constants $k_4$ and $k_5$ such that
    \begin{align}
        \label{eq:diff op 1 cross}
        \sup_{x_1,x_2\in W} \E (D_{x_1}F_t^{(1)})^2 (D_{x_2}F_t^{(1)})^2 &\leq k_4 t^{-2},\\
        \label{eq:diff op 1 cross 3rd power}
        \sup_{x\in W} \E|D_x F_t^{(1)}|^3 &\leq k_5 t^{-3/2}.
    \end{align}
\end{lem}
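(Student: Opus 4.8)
The plan is to reduce both estimates to a single uniform moment bound for the non-negative random variable $N_x \coloneqq D_x \xi_t(L)$, which counts the edge crossings in the projection having $x$ as one of their four endpoints. Since $\E\xi_t(L)$ is deterministic we have $D_x F_t^{(1)} = N_x/(t^{7/2}\delta_t^{2d+2})$, and $N_x$ is itself a Poisson $U$-statistic of order three: adding $x$ to $\eta_t$ creates exactly the crossings between an edge $\{x,y\}$ and an edge $\{z,w\}$ with $y,z,w\in V$ distinct. Writing $g_t(x;y,z,w) = \ind(\|x-y\|\leq\delta_t)\,\ind(\|z-w\|\leq\delta_t)\,\ind([x,y]|_L\cap[z,w]|_L\neq\emptyset)$, we have, up to a fixed combinatorial factor, $N_x = \sum_{(y,z,w)\in V^3_{\neq}} g_t(x;y,z,w)$. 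First I would record the two identities
\begin{align*}
    \E|D_x F_t^{(1)}|^3 &= \frac{\E N_x^3}{t^{21/2}\delta_t^{6d+6}}, &
    \E (D_{x_1}F_t^{(1)})^2 (D_{x_2}F_t^{(1)})^2 &= \frac{\E N_{x_1}^2 N_{x_2}^2}{t^{14}\delta_t^{8d+8}},
\end{align*}
and bound the mixed moment on the right by Cauchy--Schwarz, $\E N_{x_1}^2 N_{x_2}^2 \leq \sup_{x\in W}\E N_x^4$.

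Thus it suffices to prove the uniform moment bound
\begin{align*}
    \sup_{x\in W}\E N_x^m \leq C_m\,(t^3\delta_t^{2d+2})^m, \qquad m\in\{3,4\},
\end{align*}
in the thermodynamic regime, since $t^3\delta_t^{2d+2}$ is exactly the order of $\E N_x$; substituting $m=3$ and $m=4$ then yields \eqref{eq:diff op 1 cross 3rd power} and \eqref{eq:diff op 1 cross} with the required powers $t^{-3/2}$ and $t^{-2}$. To obtain the moment bound I would expand $\E N_x^m$ and apply the multivariate Mecke formula, producing a finite sum over set partitions $\sigma$ of the $3m$ argument slots; the term indexed by $\sigma$ equals $t^{|\sigma|}$ times a geometric integral $J_\sigma(x)$ in which arguments belonging to the same block are identified. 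The all-singletons partition reproduces $(\E N_x)^m = \cO((t^3\delta_t^{2d+2})^m)$, where the single-configuration estimate $\int_{W^3} g_t\,\dd y\,\dd z\,\dd w = \cO(\delta_t^{2d+2})$ follows from the same locality of the crossing condition exploited in Lemma~\ref{lem:intensity measure bounds}. It remains to show that every coincidence term is of the same order or smaller.

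\emph{The main obstacle} is precisely this geometric bookkeeping: showing that no coincidence partition beats the leading term. The argument rests on two uniform single-argument volume estimates. An argument of ``ball type'' (a neighbour $y$ of $x$, or a partner $w$ of some $z$) is confined to a $d$-dimensional ball of radius $\delta_t$, hence to volume $\cO(\delta_t^d)$; an argument of ``projection type'' (an endpoint $z$ of a crossing edge) is constrained only in the two in-plane directions, because its projection must lie within $2\delta_t$ of $x|_L$, so it ranges over a set of volume $\cO(\delta_t^2)$ with the $(d-2)$ fibre directions free. Using $t\delta_t^d\to c$ one then checks that merging two ball-type arguments multiplies a term by the neutral factor $\cO(1/(t\delta_t^d)) = \cO(1)$, whereas any merge that involves a projection-type argument multiplies it by $\cO(1/(t\delta_t^2)) = \cO(t^{-(1-2/d)})\to 0$ (using $d\geq 3$), so it strictly lowers the order. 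Since there are only finitely many partitions for fixed $m$ and every merge is neutral or improving, each $t^{|\sigma|}J_\sigma(x)$ is $\cO((t^3\delta_t^{2d+2})^m)$ uniformly in $x$, which establishes the moment bound. The delicate points are the uniformity in $x$ of the volume estimates, which needs the projection-tube reasoning already used for Lemma~\ref{lem:intensity measure bounds}, and verifying that a mixed merge of a ball-type with a projection-type argument also only loses, so that no partition escapes the classification.

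Finally, the constant $k_6$ and its accompanying second-order bound would be handled by the identical scheme: $D^2_{x_1,x_2}\xi_t(L)$ is a non-negative $U$-statistic of order two counting the crossings incident to both $x_1$ and $x_2$, and the same partition-and-volume analysis yields the required power of $t$.
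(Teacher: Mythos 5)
Your proposal is correct and takes essentially the same route as the paper: bound the added-crossing count by kernels with two ball constraints ($\cO(\delta_t^d)$ each) and one projection-tube constraint ($\cO(\delta_t^2)$), expand the third and fourth moments via the Slivnyak--Mecke formula, identify the all-distinct-points term as the leading one of order $(t^3\delta_t^{2d+2})^m$, and verify that coincidence terms never increase the order in the thermodynamic regime. Your explicit partition bookkeeping (ball--ball merges neutral since $t\delta_t^d\to c$, any merge involving a projection-type slot improving since $t\delta_t^2\to\infty$ for $d\geq 3$) and the Cauchy--Schwarz reduction of $\E N_{x_1}^2N_{x_2}^2$ to $\sup_{x}\E N_x^4$ are simply more systematic versions of the paper's ``tuples of nine/twelve points'' argument, so the two proofs coincide in substance.
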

\begin{proof}
    The first-order difference operator for the crossing number can be bounded from above as follows:
    \begin{align}
        D_x F_t^{(1)} &= \frac{1}{2}t^{-7/2} \radt^{-2d-2} \sum_{v,w,y\in V^3_{\neq}}
        \begin{multlined}[t]
            \indi{\|x-v\|\leq\radt, \|w-y\|\leq\radt}\\
            \cdot\indi{[x,v]|_L\cap [w,y]|_L \neq\emptyset}
        \end{multlined}
        \\
        &\leq \frac{1}{2}t^{-7/2} \radt^{-2d-2} \sum_{v,w,y\in V^3_{\neq}}
        \begin{multlined}[t]
            \indi{\|x-v\|\leq\radt, \|w-y\|\leq\radt}\\
            \cdot\indi{\|x|_L - w|_L\| \leq 2\radt}
        \end{multlined}
        \label{eq:bound 1st order crossings}
    \end{align}

    We have
    \begin{equation*}
        \E|D_x F_t^{(1)}|^3 \leq \frac{1}{8}t^{-21/2} \radt^{-6d-6}\E\sum_{\substack{v_1,w_1,y_1 \in V^3_{\neq} \\ v_2,w_2,y_2 \in V^3_{\neq} \\ v_3,w_3,y_3 \in V^3_{\neq}}}\prod_{i=1}^3
        \begin{multlined}[t]
            \indi{\|x-v_i\|\leq\radt, \|w_i-y_i\|\leq\radt}\\
            \cdot\indi{\|x|_L - w_i|_L\| \leq 2\radt}.
        \end{multlined}
    \end{equation*}
    We can apply the Slivnyak-Mecke formula to sums over distinct vertices in a Poisson point process. Since the nine vertices in this sum are not necessarily distinct, we have to distinguish between different cases: all vertices are distinct, eight vertices are distinct and two equal, etc. An application of the Slivnyak-Mecke formula for the expectation of the sum where all points are distinct yields
    \begin{equation*}
         \prod_{i=1}^3 \l( t^3\int_{W^3} \indi{\|x-v_i\|\leq\radt, \|w_i-y_i\|\leq\radt}
            \cdot\indi{\|x|_L - w_i|_L\| \leq 2\radt} \dd v_i \dd w_i \dd y_i \r).
    \end{equation*}
    The integral over points $v_i$ such that $\|x-v_i\|\leq\radt$ is of order $\radt^d$. The point $w_i$ must be placed in a cylinder with volume of order $\radt^2$. When $w_i$ is fixed, the point $y_i$ must be placed within a distance of $\radt$ from $w_i$ so that last integral is of order $\radt^d$ too. Each of the three integrals over $W^3$ is therefore of order $\cO_t(\radt^{2d + 2})$. Together with the prefactor the entire term is of order $\cO_t(t^{-21/2} \radt^{-6d-6} t^9 \radt^{6d+6}) = \cO_t(t^{-3/2})$.

    If the triples of points have some overlap, we sum over at most eight distinct points. An example of such a term is
    \begin{align*}
        \E&\sum_{\substack{v_1,w_1,y_1,v_2,y_2,v_3,w_3,y_3 \in V^8_{\neq} \\ w_1 = w_2}}\prod_{i=1}^3
        \indi{\|x-v_i\|\leq\radt, \|w_i-y_i\|\leq\radt} \cdot\indi{\|x|_L - w_i|_L\| \leq 2\radt}\\
        &=t^8 \int_{W^8}
        \begin{multlined}[t]
            \indi{\|x-v_1\|\leq\radt, \|w_1-y_1\|\leq\radt} \cdot\indi{\|x|_L - w_1|_L\| \leq 2\radt}^2\\
            \cdot \indi{\|x-v_2\|\leq\radt, \|w_1-y_2\|\leq\radt}
            \cdot \indi{\|x-v_3\|\leq\radt, \|w_3-y_3\|\leq\radt}\\ \cdot\indi{\|x|_L - w_3|_L\| \leq 2\radt}
            \dd v_1 \dd w_1 \dd y_1 \dd v_2 \dd y_2 \dd v_3 \dd w_3 \dd y_3.
        \end{multlined}
    \end{align*}
    The order of this integral can be determined the same way as before with the nine distinct vertices and leads to a contribution of order $\cO_t(t^{-{5/2}}\radt^{-2})$. In general, one can see that restricting two vertices to be equal results in a loss of an integral of order $\cO_t(t\radt^2)$ or $\cO_t(t\radt^d)$. It is then clear that the restriction that two or more vertices are the same does not increase the order of this expectation. These other terms are therefore also at most $\cO_t(t^{-3/2})$. We can conclude that $\E|D_x F_t^{(1)}|^3 = \cO_t(t^{-3/2})$.
    
    Similar calculations with tuples of twelve points show that 
    \begin{align*}
        \sup_{x_1,x_2\in W} \E (D_{x_1}F_t^{(1)})^2 (D_{x_2}F_t^{(1)})^2 &\leq \sup_{x_1,x_2\in W} \E[ (D_{x_1}F_t^{(1)})^4 + (D_{x_2}F_t^{(1)})^4]\\
        &= 2\sup_{x\in W} \E (D_{x}F_t^{(1)})^4 = \cO_t(t^{-2})
    \end{align*}
    which is what we needed to show for \eqref{eq:diff op 1 cross}.
\end{proof}

\begin{lem}
    Under the conditions stated in Theorem \ref{thm:multivariate CLT} the parameters $\gamma_1$, $\gamma_2$ and $\gamma_3$ are of the following order:
    \begin{equation*}
        \gamma_1 = \cO_t(t^{-1/2}),\
        \gamma_2 = \cO_t(t^{-1/2}),\ \text{and }
        \gamma_3 = \cO_t(t^{-1/2}).
    \end{equation*}
\end{lem}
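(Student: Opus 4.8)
The plan is to treat the three parameters separately, disposing of $\gamma_3$ immediately and reducing $\gamma_1$ and $\gamma_2$ to a single new estimate on the second-order difference operator of the crossing number. For $\gamma_3$ I would bound the integrand by its supremum: since $\lambda_d(W)=1$, the third-moment bounds \eqref{eq:diff op 1 stress 3rd power} and \eqref{eq:diff op 1 cross 3rd power} give $\int_W \E|D_x F_t^{(i)}|^3\,\dd x \le \sup_{x\in W}\E|D_x F_t^{(i)}|^3 = \cO(t^{-3/2})$ for $i=1,2$, so that $\gamma_3 = t\cdot\cO(t^{-3/2}) = \cO(t^{-1/2})$.

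For $\gamma_1$ and $\gamma_2$ I would first write the sums over $i,j$ so as to isolate the stress and crossing contributions. Abbreviating $a_i(x_1,x_2,x_3) \coloneqq (\E(D^2_{x_1,x_3}F_t^{(i)})^2(D^2_{x_2,x_3}F_t^{(i)})^2)^{1/2}$ and $b_j(x_1,x_2) \coloneqq (\E(D_{x_1}F_t^{(j)})^2(D_{x_2}F_t^{(j)})^2)^{1/2}$, the integrands become $(a_1+a_2)(b_1+b_2)$ for $\gamma_1$ and $(a_1+a_2)^2$ for $\gamma_2$. It then suffices to show that each of the finitely many products $a_ia_j$ and $a_ib_j$ integrates to $\cO(t^{-4})$ over $W^3$, since then $\gamma_1,\gamma_2 = t^{3/2}\cdot(\cO(t^{-4}))^{1/2} = \cO(t^{-1/2})$. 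The stress factors are controlled by the already-proven bounds: \eqref{eq:diff op 1 stress} gives $b_2 \le \sqrt{k_1}\,t^{-1}$ and \eqref{eq:diff op 2 stress} gives $a_2 \le \sqrt{k_3}\,t^{-3}$ uniformly, while \eqref{eq:diff op 1 cross} gives $b_1 = \cO(t^{-1})$; hence any product containing the factor $a_2$ is $\cO(t^{-4})$ uniformly and integrates to $\cO(t^{-4})$.

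The crux is therefore the crossing second-order operator $a_1$, for which no pointwise bound has yet been stated. Following the Slivnyak--Mecke technique of Lemma \ref{lem:bounds crossings}, I would expand $\E(D^2_{x_1,x_3}F_t^{(1)})^2(D^2_{x_2,x_3}F_t^{(1)})^2$ into a sum over the at most eight process points carried by the four second differences, distinguishing whether the inserted points $x_1,x_3$ (and likewise $x_2,x_3$) lie on a common edge or on the two distinct crossing edges. The decisive feature is that $D^2_{x_1,x_3}F_t^{(1)}$ vanishes unless $x_1$ and $x_3$ can jointly belong to a crossing, which forces them to be close. In the dominant, common-edge case this means $\|x_1-x_3\|\le 2\delta_t$ and $\|x_2-x_3\|\le 2\delta_t$, so the integrand is supported on a set of measure $\cO(\delta_t^{2d})$; there the prefactor $t^{-14}\delta_t^{-8d-8}$ together with the Slivnyak--Mecke volume factors leaves $a_1 = \cO(t^{-1})$. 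Using $t\delta_t^d\to c$ to turn $\delta_t^{2d}$ into $\cO(t^{-2})$ gives $\int_{W^3}a_1^2\,\dd x \le \cO(t^{-2})\cdot\cO(t^{-2}) = \cO(t^{-4})$, and the same support-and-pointwise estimate yields $\int_{W^3}a_1 b_j\,\dd x \le \cO(t^{-1})\cdot\cO(t^{-1})\cdot\cO(t^{-2}) = \cO(t^{-4})$ for $j=1,2$. The cross-edge configurations contribute at strictly smaller order and are checked analogously.

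I expect the main obstacle to be precisely this last estimate. It is essential not to replace $a_1$ by a global supremum: a uniform bound $a_1 = \cO(t^{-1})$ integrated over all of $W^3$ (of volume one) would only give $\int a_1^2 = \cO(t^{-2})$ and hence $\gamma_2 = \cO(t^{1/2})$, which is useless. The smallness must instead come from the shrinking support of the second-order operator, so the Slivnyak--Mecke bookkeeping has to retain the indicator constraints tying $x_1,x_3$ and $x_2,x_3$ together rather than maximizing them away. The case analysis (same-edge versus cross-edge placement of the inserted points) and the verification that every partition of the process points in the Slivnyak--Mecke expansion---not only the leading all-distinct term---respects the target order $\cO(t^{-4})$ are the routine but delicate parts of the argument.
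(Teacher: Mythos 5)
Your proposal is correct and takes essentially the same approach as the paper: uniform moment bounds dispose of $\gamma_3$ and of every term involving the stress, while the crossing terms are handled exactly as you describe, by splitting $D^2_{x_1,x_3}F_t^{(1)}$ into the common-edge and distinct-crossing-edge cases and letting the resulting proximity indicators shrink the effective domain of integration before applying Slivnyak--Mecke and $t\delta_t^d\to c$. Your closing remark about the smallness coming from the shrinking support rather than a global supremum is precisely the mechanism the paper uses; the only difference is organizational, in that the paper first pulls out the first-order factors uniformly and then shows $\int_{W^3}\l(\E(D^2_{x_1,x_3}F_t^{(1)})^2(D^2_{x_2,x_3}F_t^{(1)})^2\r)^{1/2}\dd x_1\dd x_2\dd x_3=\cO(t^{-3})$ for $\gamma_1$ (and the analogous $\cO(t^{-4})$ bound without the square root for $\gamma_2$), rather than bounding each of the products $a_ia_j$, $a_ib_j$ by $\cO(t^{-4})$ as you do.
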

\begin{proof}
    The order of $\gamma_3$ follows immediately from Lemma \ref{lem:bounds stress} and \ref{lem:bounds crossings}. For $\gamma_1$, we obtain by \eqref{eq:diff op 1 stress} and \eqref{eq:diff op 1 cross}
    \begin{equation*}
        \gamma_1 \leq (\sup\{k_1, k_4\})^{1/4} t\Bigg(\sum_{i,j=1}^2\int_{W^3} \l(\E (D_{x_1,x_3}^2 F_t^{(i)})^2 (D_{x_2,x_3}^2 F_t^{(i)})^2\r)^{1/2}
        \dd x_1 \dd x_2 \dd x_3\Bigg)^{1/2},
    \end{equation*}
    which implies
    \begin{multline*}
        \gamma_1\leq 2
        (\sup\{k_1, k_4\})^{1/4} t\\
        \bigg(k_3^{1/2}t^{-3} + \int_{W^3} \l(\E (D_{x_1,x_3}^2 F_t^{(1)})^2 (D_{x_2,x_3}^2 F_t^{(1)})^2\r)^{1/2}
        \dd x_1 \dd x_2 \dd x_3\bigg)^{1/2}
    \end{multline*}
    by \eqref{eq:diff op 2 stress} and $\lambda_d(W)=1$.
    
    Note that $D_{x_1,x_3}^2 F_t^{(1)}$ counts the number of crossings with both vertices $x_1$ and $x_3$ involved which either means that $x_1$ and $x_3$ form a new edge and intersect an existing one or that $x_1$ and $x_3$ are endpoints of distinct edges which cross in their projection:
    \begin{align*}
        D_{x_1,x_3}^2 &\begin{multlined}[t]
        F_t^{(1)}=\\
                \frac{1}{2t^{7/2}\radt^{2d+2}}\!
                \sum_{v_1,v_3\in V^2_{\neq}}\!\indi{\|x_1-x_3\|\leq\radt,\|v_1-v_3\|\leq\radt}\indi{[x_1,x_3]|_L\cap [v_1,v_3]|_L \neq\emptyset}\\
                + \indi{\|x_1-v_1\|\leq\radt,\|x_3-v_3\|\leq\radt}\indi{[x_1,v_1]|_L\cap [x_3,v_3]|_L \neq\emptyset}.
        \end{multlined}\\
        &\leq
            \begin{multlined}[t]
                \frac{1}{2t^{7/2}\radt^{2d+2}}\!\sum_{v_1,v_3\in V^2_{\neq}}\!\indi{\|x_1-x_3\|\leq\radt,\|v_1-v_3\|\leq\radt} \indi{\|x_1|_L - v_1|_L\|\leq 2\radt}\\
                + \indi{\|x_1-v_1\|\leq\radt,\|x_3-v_3\|\leq\radt}\indi{\|x_1|_L - x_3|_L\|\leq 2\radt}.
        \end{multlined}
        \label{eq:bound 2nd order crossings}
    \end{align*}
    
    Multiplying four such expressions to bound $\gamma_1$ and $\gamma_2$ leads to a sum of products of indicator functions in a way similar to the proof of Lemma \ref{lem:bounds crossings}, where this time the sum is over four pairs of points from $V$. These pairs of points may have overlap, and the products of indicators can consist of combinations of the two different products we see in the equation above. Calculating $\gamma_1$ and $\gamma_2$ therefore requires many applications of the Slivnyak-Mecke formula to all various cases. However, one again quickly sees that the highest order terms are the ones where the eight-tuples over which we sum consist of distinct vertices.

    We distinguish between the two extreme cases: either $\|x_1-x_2\|\leq r_t$ and $\|x_1-x_3\|\leq r_t$ or $\|x_1-x_2\|> r_t$ and $\|x_1-x_3\|> r_t$. 
    If $x_1$ and $x_3$ form an edge, each integral
        $$\int_{W^2}\indi{\|x_1-x_3\|\leq\radt,\|v_i-v_j\|\leq\radt} \indi{\|x_1|_L - v_i|_L\|\leq 2\radt}\dd v_i\, \dd v_j$$
    resulting from an application of the Slivnyak-Mecke formula contributes a factor of order $(t \radt^d)(t\radt^2) = t^2\radt^{d+2}$. The factors
        $$\int_{W^2}\indi{\|x_1-v_i\|\leq\radt,\|x_3-v_j\|\leq\radt}\indi{\|x_1|_L - x_3|_L\|\leq 2\radt}\dd v_i\, \dd v_j$$
    each contribute a term of order $(t\radt^d)(t\radt^d) = t^2\radt^{2d}$ whenever $\|x_1|_L - x_3|_L\|\leq 2\radt$.

    If $\|x_1-x_2\|\leq r_t$ and $\|x_1-x_3\|\leq r_t$, the points $x_2$ and $x_3$ lie in a volume of order $r_t^d$ so the integral contributes a factor of order $r_t^{2d}$. The expectation contributes a factor of order
    \begin{equation*}
        \l(\E (D_{x_1,x_3}^2 F_t^{(1)})^2 (D_{x_2,x_3}^2 F_t^{(1)})^2\r)^{1/2} = \cO_t\Big(\frac{(t^2\radt^{d+2})^2}{t^7\radt^{4d+4}}\Big) = \cO_t(t^{-3}\radt^{-2d}).
    \end{equation*}
    If $\|x_1-x_2\|> r_t$ and $\|x_1-x_3\|> r_t$, the points $x_2$ and $x_3$ must each lie in a volume of order $\radt^2$ in order for the integrand to be non-zero. The contribution of the expectation is a factor of order
    \begin{equation*}
        \l(\E (D_{x_1,x_3}^2 F_t^{(1)})^2 (D_{x_2,x_3}^2 F_t^{(1)})^2\r)^{1/2} = \cO_t\Big(\frac{(t^2\radt^{2d})^2}{t^7\radt^{4d+4}}\Big) = \cO_t(t^{-3}\radt^{-4}).
    \end{equation*}

    Combining the above calculations, we obtain 
    \begin{equation*}
        \int_{W^3} \l(\E (D_{x_1,x_3}^2 F_t^{(1)})^2 (D_{x_2,x_3}^2 F_t^{(1)})^2\r)^{1/2}
        \dd x_1 \dd x_2 \dd x_3=\cO_t(t^{-3}).
    \end{equation*}
    It follows that $\gamma_1 = \cO_t(t^{-1/2})$.

    The order of $\gamma_2$ is derived similarly. The largest term, which determines the order, comes from the number of crossings:
    \begin{equation*}
        \int_{W^3} \E (D_{x_1,x_3}^2 F_t^{(1)})^2 (D_{x_2,x_3}^2 F_t^{(1)})^2
        \dd x_1 \dd x_2 \dd x_3 = \cO_t(t^{-4}),
    \end{equation*}
    \begin{equation*}
        \int_{W^3} \E (D_{x_1,x_3}^2 F_t^{(2)})^2 (D_{x_2,x_3}^2 F_t^{(2)})^2
        \dd x_1 \dd x_2 \dd x_3 = \cO_t(t^{-6}),
    \end{equation*}
    \begin{multline*}
        \int_{W^3} \l(\E (D_{x_1,x_3}^2 F_t^{(1)})^2 (D_{x_2,x_3}^2 F_t^{(1)})^2\r)^{1/2}\\
        \times\l(\E (D_{x_1,x_3}^2 F_t^{(2)})^2 (D_{x_2,x_3}^2 F_t^{(2)})^2\r)^{1/2}
        \dd x_1 \dd x_2 \dd x_3 = \cO_t(t^{-5}).
    \end{multline*}
    Hence $\gamma_2 = \cO_t(t^{-1/2})$
\end{proof}

The obtained parameters $\gamma_1$, $\gamma_2$ and $\gamma_3$ can now be combined with \cite[Theorem 1.1]{Schulte2019} to prove the following proposition and Theorem \ref{thm:multivariate CLT}.
\begin{prop}
    Suppose that the covariance matrix $\Sigma_t$ of $F_t$ converges to $\Sigma = (\sigma_{ij})_{i,j\in\{1,2\}}$. Then
    \begin{equation*}
        \dd_3(F_t, Z_\Sigma) \leq \sum_{i,j = 1}^2 |\sigma_{ij} - \Cov(F_t^{(i)},F_t^{(j)})| +  \cO_t(t^{-1/2}),
    \end{equation*}
    where $Z_{\Sigma}\sim\cN(0,\Sigma)$.
    \label{prop:clt}
\end{prop}
\begin{proof}
    We have
    \begin{equation*}
        2\gamma_1 + \gamma_2 + \gamma_3 = \cO_t(t^{-1/2}).
    \end{equation*}
    The result then immediately follows from an application of \cite[Theorem 1.1]{Schulte2019}.
\end{proof}

\begin{proof}[Proof of Theorem \ref{thm:multivariate CLT}]
    Similarly to the proof of Proposition \ref{prop:clt}, we have
    \begin{equation*}
        \dd_3(F_t, Z_{\Sigma_t}) \leq \sum_{i,j = 1}^2 |\Cov(F_t^{(i)},F_t^{(j)}) - \Cov(F_t^{(i)},F_t^{(j)})| +  \cO_t(t^{-1/2}) = \cO_t(t^{-1/2}).
    \end{equation*}
    
    The convergence in distribution of $F_t$ to $\cN(0,\Sigma)$ follows from Proposition \ref{prop:clt}. Since $\Cov(F_t^{(i)},F_t^{(j)}) \to \sigma_{ij}$ for $i,j\in\{1,2\}$, we know that the upper bound on $\dd_3(F_t, Z_\Sigma)$ goes to zero, and we obtain the desired result.
\end{proof}

\appendix
\section{Variance of stress and covariance}
\label{ap:variance}
The results from \cite{Chimani2018} on the stress assume that the stress is a $U$-statistic, i.e. that $S(v_1,v_2;V)$ is independent of $V\setminus\{v_1,v_2\}$. The goal of this appendix is to extend the results from \cite{Chimani2018} for the stress to a broader class of stress functionals, where the metrics $\dd_0$ and $\dd_L$ can also depend on the vertices of the graph. These derivations follow essentially the same steps as the proofs in \cite{Chimani2018}.

Applying the Slivnyak-Mecke formula to the expectation of the stress, we obtain
\begin{align*}
    \E \stress(G,G_L) &= \E \frac{1}{2}\sum_{(v_1,v_2)\in V_{\neq}^2} S(v_1,v_2; V)\\
    &= \frac{1}{2} t^2 \int_{W^2} \E \l[S(v_1,v_2; V\cup\{v_1,v_2\})\r] \dd v_1 \dd v_2.
\end{align*}
For the calculation of the variance, we follow the proof of \cite[Theorem 21]{chimani2020Arxiv} and again apply the Slivnyak-Mecke formula. Then,
\begin{align*}
    \Var \stress(G,G_L) &= \E \sum_{(v,v_2,w_2)\in V^3_{\neq}} S(v,v_2;V)S(v,w_2;V) + \frac{1}{2}\E \sum_{(v_1,v_2)\in V^2_{\neq}} S(v_1,v_2;V)^2\\
    &= \begin{multlined}[t]
        t^3\int_{W^3} \E S(v,v_2;V\cup\{v,v_2\})S(v,w_2;V\cup\{v,w_2\})\dd v_2 \dd w_2 \dd v \\
        + \frac{1}{2}t^2 \int_{W^2} \E S(v_1,v_2;V\cup\{v_1,v_2\})^2 \dd v_1 \dd v_2
    \end{multlined}
    \\
    &= t^3\int_{W^3} \E S(v,v_2;V\cup\{v,v_2\})S(v,w_2;V\cup\{v,w_2\})\dd v_2 \dd w_2 \dd v + \cO(t^2).
\end{align*}
This expression is independent of whether or not we project the graph and is therefore also valid for $d=2$.

For the covariance, we follow the proof of \cite[Theorem 23]{chimani2020Arxiv}. We then obtain
\begin{multline*}
    \Cov(\xi_t(L), \stress(G,G_L)) = \frac{1}{2} c_d t^5 \radt^{2d+2}\int_W \lambda_{d-2}((v+L^{\perp})\cap W) \\
    \cdot\int_{W} \E\l[S(v,w; V\cup \{v,w\})\r] \dd w \dd v\,(1+o(1)).
\end{multline*}
When $d=2$, we obtain following similar calculations
\begin{equation*}
    \Cov(\xi_t(L), \stress(G,G_L)) = \frac{1}{2} c_2 t^5 \radt^{6}\int_{W^2} \E\l[S(v,w; V\cup \{v,w\})\r] \dd w \dd v\,(1+o(1)).
\end{equation*}

The vertex set $V$ depends on the intensity $t$ of the point process. The metrics $\dd_0$ and $\dd_L$ and the weight function appearing in the stress functional can be chosen to depend on the intensity $t$ too. In order for the covariance matrix in Theorem \ref{thm:multivariate CLT} to converge, the integrals in the variance and covariance above must converge.

\section{Central limit theorem rate of convergence for $W = [0,1]^d$}
\label{sec:cube}

To determine how fast $F_t$ converges to the normal distribution, we need a rate of convergence of the variance and the covariance of the crossing number and the stress. For some choices of $W$, these rates can be derived by making some adjustments to the derivations of the variance and covariance in \cite{chimani2020Arxiv}. The easiest choice of $W$ is the $d$-dimensional cube because of its constant height with respect to $\R^2\times\{0\}^{d-2}$. 
\begin{prop}
    Let $W=[0,1]^d$ and $L = \R^2 \times \{0\}^{d-2}$ and let the stress be independent of the vertex set $V$ of $G$. Define $\radt$ such that $t\radt^d=c$ for all $t>0$ and let $\Sigma$ be defined as in Theorem \ref{thm:multivariate CLT}.  Then for $Z_{\Sigma}\sim\cN(0,\Sigma)$,
    \begin{equation*}
        \dd_3(F_t,Z_{\Sigma})) = \cO(\radt)
    \end{equation*}
    as $t$ goes to infinity.
    \label{prop:clt cube}
\end{prop}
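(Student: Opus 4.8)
The plan is to combine Proposition~\ref{prop:clt} with an explicit rate of convergence for the covariance entries in the special case $W=[0,1]^d$, $L=\R^2\times\{0\}^{d-2}$. By Proposition~\ref{prop:clt} we already have
\begin{equation*}
    \dd_3(F_t,\cN(0,\Sigma)) \leq \sum_{i,j=1}^2 |\sigma_{ij} - \Cov(F_t^{(i)},F_t^{(j)})| + \cO_t(t^{-1/2}),
\end{equation*}
so it suffices to show that each $|\sigma_{ij} - \Cov(F_t^{(i)},F_t^{(j)})| = \cO(\delta_t)$ and then check that the residual $\cO_t(t^{-1/2})$ term is itself $\cO(\delta_t)$ under the scaling $t\delta_t^d=c$. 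The latter is a quick calculation: since $t\delta_t^d=c$ is constant, $\delta_t = (c/t)^{1/d}$, so $t^{-1/2} = \cO(\delta_t)$ exactly when $d \leq 2$; for $d\geq 3$ one has $t^{-1/2} \ll \delta_t$, so $t^{-1/2}=\cO(\delta_t)$ holds and the Stein-bound term is absorbed. Thus the whole content of the proposition reduces to bounding the covariance discrepancies.

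For the covariance rates I would return to the integral representations of $\Var\xi_t(L)$, $\Var\stress(G,G_L)$, and $\Cov(\xi_t(L),\stress(G,G_L))$ derived in \cite{chimani2020Arxiv} and \cite{Chimani2018}, now keeping track of the error terms rather than only the leading order. After applying the Slivnyak--Mecke formula, each (co)variance becomes a sum of integrals over $W^k$ of indicator products, scaled by the appropriate power of $t$ and $\delta_t$. The limiting constants $c_1,c_2,c_3$ arise from the leading integrals; the discrepancy $|\sigma_{ij} - \Cov(F_t^{(i)},F_t^{(j)})|$ comes from two sources: boundary effects (vertices near $\partial W$ whose $\delta_t$-neighborhoods are truncated) and the subleading terms in the inclusion--exclusion over coincident vertices. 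The key geometric simplification here is that for the cube with $L=\R^2\times\{0\}^{d-2}$, the fibre $(v+L^\perp)\cap W$ is a translate of $[0,1]^{d-2}$ of constant $(d-2)$-volume equal to $1$ for all $v$ in the interior of $W_L=[0,1]^2$, so the density $\lambda_{d-2}((v+L^\perp)\cap W)$ is piecewise constant and the only $v$-dependence sits in a $\cO(\delta_t)$-thick boundary layer of $W_L$.

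Concretely, I would argue that the boundary corrections are of relative order $\delta_t$: the set of points within distance $\cO(\delta_t)$ of $\partial W_L$ has area $\cO(\delta_t)$, and on this layer the integrands differ from their interior values by a bounded amount, so each such correction contributes $\cO(\delta_t)$ to the normalized (co)variance. Similarly, the subleading coincidence terms in the variance expansions already carry an extra factor of $t\delta_t^d = c$ or $\delta_t^d$ relative to the leading term once the scaling by $t^7\delta_t^{4d+4}$ (respectively $t^3$, $t^5\delta_t^{2d+2}$) is applied; under $t\delta_t^d=c$ these reduce to powers of $\delta_t$ that are $\cO(\delta_t)$ or smaller. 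Collecting the three entries then gives $\sum_{i,j}|\sigma_{ij}-\Cov(F_t^{(i)},F_t^{(j)})| = \cO(\delta_t)$, and combined with the Stein term the proposition follows.

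The main obstacle I anticipate is the bookkeeping in the variance and covariance expansions: one must re-examine the full inclusion--exclusion over the overlap structure of the vertex-tuples (not just the leading all-distinct term) and verify that \emph{every} subleading contribution, after normalization, is $\cO(\delta_t)$ rather than merely $o(1)$. This is where a careless estimate could lose the rate. The favourable geometry of the cube --- constant fibre volume away from the boundary, and a boundary layer of controlled area --- is precisely what makes the sharper $\cO(\delta_t)$ bound attainable, and I would lean on it to reduce each error integral to a boundary-layer estimate of area $\cO(\delta_t)$ with bounded integrand.
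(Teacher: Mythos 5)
Your proposal follows essentially the same route as the paper: start from Proposition~\ref{prop:clt}, absorb the $\cO_t(t^{-1/2})$ Stein term into $\cO(\delta_t)$ (valid since $\delta_t\sim t^{-1/d}$ with $d\geq 3$ --- though your parenthetical ``exactly when $d\leq 2$'' should read $d\geq 2$), and obtain $\cO(\delta_t)$ rates for the covariance entries by exploiting the cube's constant fibre volume together with an $\cO(\delta_t)$-thick boundary layer and the subleading coincidence terms. This is precisely what the paper carries out in Lemmas~\ref{lem:variance cube} and~\ref{lem:covariance cube}, using the sandwich bounds on $I_{W,L}^{(1)}$ and $I_{W,L}^{(2)}$ from \cite{chimani2020Arxiv}, so your plan matches the paper's proof in both structure and key ideas.
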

The improved approximations of the variance and covariance needed for the derivation of this result are given in the following two lemmas:
\begin{lem}
    Consider the model described in Proposition \ref{prop:clt cube}. The variance of the crossing number is
    \begin{equation}
        \Var \xi_t(L) = \frac{1}{8} c^4 t^3 \radt^{4}(2c_d^2 + c_d' c^{-1})(1+\cO_t(\radt)),
        \label{eq:variance improved}
    \end{equation}
    where $c_d' = \pi \kappa_{d-2}^3 \bfB(3,d/2)^2\bfB(4,d/2)$ and $c_d$ is defined as in Lemma \ref{lem:intensity measure}.
    \label{lem:variance cube}
\end{lem}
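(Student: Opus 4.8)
The plan is to compute $\Var\xi_t(L)$ directly from its $U$-statistic structure in the special case $W=[0,1]^d$, $L=\R^2\times\{0\}^{d-2}$, tracking the constant and the first-order error term in $\delta_t$ rather than just the leading order. Recall from the proof of Lemma \ref{lem:convergence var exp} that
\begin{equation*}
    \Var\xi_t(L) = \frac{1}{64}\,\E\!\!\sum_{\substack{(v_1,\dots,v_4)\in V^4_{\neq}\\(w_1,\dots,w_4)\in V^4_{\neq}\\ |\{v_i\}\cap\{w_j\}|\geq 1}} f(v_1,\dots,v_4)f(w_1,\dots,w_4),
\end{equation*}
and that the terms are organized by the number of shared vertices between the two quadruples. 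The dominant contribution to $\Var\xi_t(L)$ comes from two sources: the diagonal terms where $\{v_i\}=\{w_j\}$, which reproduce $\E\xi_t(L) = \cO_t(t^4\delta_t^{2d+2}) = \cO_t(t^3\delta_t^{d+2})$ and are lower order, and the terms sharing exactly one vertex, which are of order $t^7\delta_t^{4d+4}$. The cube geometry is what makes the constants explicit: since $W=[0,1]^d$ has constant ``height'' $\lambda_{d-2}((v+L^\perp)\cap W) = 1$ for every $v$ in the interior $[0,1]^2\times\{0\}^{d-2}$, the profile integrals collapse and the leading coefficient becomes a clean product of the beta-function constants.

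First I would apply the Slivnyak--Mecke (multivariate Mecke) formula to each sharing-class separately, converting the expected sum over distinct Poisson points into Lebesgue integrals over $W$ with the appropriate power of $t$. For the one-shared-vertex class (seven distinct points, prefactor $t^7$) I would exploit the translation-invariance used already in Lemma \ref{lem:intensity measure bounds}: writing each edge endpoint relative to a base vertex via vectors in $\delta_t B_d$, the $d$-dimensional height integrals factor out as the constant $1$ on the cube away from the boundary, and the remaining planar intersection integrals give exactly the two distinct geometric constants $c_d = 8\pi\kappa_{d-2}^2\bfB(3,d/2)^2$ (from pairs of independent edges crossing) and $c_d' = \pi\kappa_{d-2}^3\bfB(3,d/2)^2\bfB(4,d/2)$ (from the configuration where the shared vertex forces a correlation among three of the four edge-endpoints). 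This is where the stated formula $\tfrac18 c^4 t^3\delta_t^4(2c_d^2 + c_d'c^{-1})$ comes from after substituting $t\delta_t^d=c$, so that $t^7\delta_t^{4d+4} = c^4 t^3\delta_t^4$: the factor $2c_d^2$ tracks the two combinatorial ways two crossings can share a single vertex with the other three endpoints independent, while the $c_d' c^{-1}$ term (carrying one fewer power of $t\delta_t^d=c$, hence the $c^{-1}$) records the genuinely correlated sub-configuration.

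The heart of the computation, and the step I expect to be the main obstacle, is the \emph{boundary analysis} that upgrades the leading-order asymptotic to the multiplicative error $(1+\cO_t(\delta_t))$. On the cube the height is identically $1$ in the interior, but within a $\cO(\delta_t)$-neighborhood of $\partial([0,1]^2)$ the edges can protrude outside $W$, so the integrals over the inner parallel set $W_{-3\delta_t}$ versus $W$ differ. The strategy is exactly that of Lemma \ref{lem:uniform error bound}: the measure of the boundary layer $W\setminus W_{-c\delta_t}$ is $\cO(\delta_t)$ relative to the whole, and since the leading term is a nonzero constant this relative boundary discrepancy is precisely the source of the $\cO_t(\delta_t)$ correction rather than the $\cO(\delta_t^2)$ one might naively hope for. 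I would bound the corner and edge contributions of the cube separately and verify they are all $\cO(\delta_t)$ relative to the leading volume term. Finally I would confirm that the remaining sharing-classes (two and three shared vertices), which carry prefactors $t^6$ and $t^5$, contribute orders $t^6\delta_t^{3d+4}+t^6\delta_t^{4d+2}+t^5\delta_t^{3d+2}$; substituting $t\delta_t^d=c$ these become $\cO_t(t^3\delta_t^4\cdot\delta_t)$-type corrections or smaller, hence absorbed into the $(1+\cO_t(\delta_t))$ factor, so that \eqref{eq:variance improved} holds as stated.
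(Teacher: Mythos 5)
Your overall framework---Slivnyak--Mecke applied separately to each class of shared vertices, the cube's constant height collapsing the profile integrals, and an inner-parallel-set boundary analysis producing the multiplicative $\cO_t(\delta_t)$ error---is the same route as the paper, which imports the decomposition and the $J_L^{(1)}$, $J_L^{(2)}$ bounds from \cite{chimani2020Arxiv}. But your bookkeeping of which sharing class produces which constant is wrong, in a way that would break the computation if carried out as written. In the one-shared-vertex class (seven distinct points, prefactor $t^7$), the two crossing configurations are conditionally independent given the shared vertex $v$, so by the Mecke formula this class contributes
\begin{equation*}
    \frac{1}{4}\, t^7 \int_W I_{W,L}^{(1)}(v)^2 \,\dd v \;=\; \frac{1}{8}\, t^7\delta_t^{4d+4}\cdot 2c_d^2\,(1+\cO_t(\delta_t))
\end{equation*}
on the cube: the constant is necessarily a \emph{square} of $c_d$, and no $c_d'$ can arise here. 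The constant $c_d'$ comes instead from the class in which the two crossings share a whole edge: six distinct points, one edge $[v,v_2]$ crossing two other edges, contributing $\frac{1}{8}t^6\int_W I_{W,L}^{(2)}(v)\,\dd v = \frac{1}{8}t^6\delta_t^{3d+4}c_d'(1+\cO_t(\delta_t))$.

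This is exactly the term you dismiss: you claim the two- and three-shared-vertex classes contribute $t^6\delta_t^{3d+4}+t^6\delta_t^{4d+2}+t^5\delta_t^{3d+2}$ and are ``absorbed into the $(1+\cO_t(\delta_t))$ factor.'' But under $t\delta_t^d=c$ one has $t^6\delta_t^{3d+4}=c^3t^3\delta_t^4$, which is of the \emph{same} order as the leading term $t^7\delta_t^{4d+4}=c^4t^3\delta_t^4$; it is precisely the $c_d'c^{-1}$ contribution. Your own parenthetical betrays the inconsistency: a term ``carrying one fewer power of $t\delta_t^d=c$'' must carry the prefactor $t^6$, so it cannot live inside the seven-point class with prefactor $t^7$. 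Executed faithfully, your plan would therefore yield $\frac{1}{8}c^4t^3\delta_t^4\cdot 2c_d^2(1+\cO_t(\delta_t))$ and lose the $c_d'c^{-1}$ term altogether. The repair is to split the two-shared-vertex class: only the configurations where the shared pair does \emph{not} form a common edge are lower order (these give $\cO_t(t^6\delta_t^{4d+2})$, and together with $\cO_t(t^5\delta_t^{3d+2})$ and the diagonal $\cO_t(t^4\delta_t^{2d+2})$ they are $\cO_t(t^3\delta_t^4\cdot\delta_t)$ for $d\geq 3$), while the shared-edge configurations must be computed exactly, as the paper does via $I_{W,L}^{(2)}$ and $J_L^{(2)}$.
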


\begin{lem}
    Consider the model described in Proposition \ref{prop:clt cube}. For the covariance between the crossing number and the stress it holds that
    \begin{equation}
        \Cov(\xi_t(W_L),\stress(G,G_L)) = \frac{1}{2} c_d t^3\radt^{2}\int_W S_{W,L}^{(1)}(v)\dd v (1+ \cO_t(\radt)),
        \label{eq:covariance improved}
    \end{equation}
    where 
    \begin{equation*}
        S_{W,L}^{(1)}(v) \coloneqq \int_{W-v} w(0,v_2) (\dd_0(0,v_2) - \dd_L(0,v_2))^2 \dd v_2.
    \end{equation*}
    \label{lem:covariance cube}
\end{lem}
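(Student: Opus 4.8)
The plan is to evaluate the covariance through the overlap decomposition of a product of two Poisson $U$-statistics. Writing the crossing number as $\xi_t(L)=\frac18\sum_{(v_1,\dots,v_4)\in V^4_{\neq}}f(v_1,\dots,v_4)$ with $f$ the product of the two distance indicators and the projected-crossing indicator, and the stress as $\stress(G,G_L)=\frac12\sum_{(w_1,w_2)\in V^2_{\neq}}S(w_1,w_2)$, I would expand $\E[\xi_t(L)\stress(G,G_L)]$ and group the terms of the double sum according to the number $m\in\{0,1,2\}$ of vertices shared between the crossing quadruple $\{v_1,\dots,v_4\}$ and the stress pair $\{w_1,w_2\}$. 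Applying the multivariate Mecke formula to each group, the fully disjoint part ($m=0$) integrates into $\E\xi_t(L)\cdot\E\stress(G,G_L)$ and cancels exactly against the product of expectations. Hence $\Cov(\xi_t(L),\stress(G,G_L))$ is the sum of the $m=1$ and $m=2$ contributions, and the task reduces to evaluating these and comparing their orders.

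The leading contribution comes from $m=1$, where exactly one crossing vertex, say $v_1=v$, coincides with one stress vertex while the other stress vertex $w$ ranges freely over $W$. After accounting for the $\tfrac18$, $\tfrac12$ and the combinatorial count of admissible identifications (which vertex of the quadruple is shared, and with which of the two stress slots), the Mecke formula turns this into an integral over five distinct points with factor $t^5$, which by Fubini splits as
\[
\tfrac12\, t^5\int_W \Bigl(\int_{W^3} f(v,v_2,v_3,v_4)\,\dd v_2\,\dd v_3\,\dd v_4\Bigr)\Bigl(\int_W S(v,w)\,\dd w\Bigr)\dd v.
\]
The inner stress integral equals $S_{W,L}^{(1)}(v)$ by translation invariance of $d_0$, $d_1$ and $w$. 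The inner crossing integral, which fixes one vertex at $v$ and closes up the crossing around it, reproduces the local crossing factor: rescaling the three remaining vertices by $\delta_t$ gives exactly the constant $c_d$ from Lemma~\ref{lem:intensity measure} times $\delta_t^{2d+2}$, with the single free-height fibre $\lambda_{d-2}((v+L^{\perp})\cap W)\equiv1$ for the cube. This yields the leading term $\tfrac12 c_d\, t^5\delta_t^{2d+2}\int_W S_{W,L}^{(1)}(v)\,\dd v$, which by $t\delta_t^d=c$ is of the order $t^3\delta_t^2$ claimed in the lemma.

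It remains to show that the rest is of relative order $\cO_t(\delta_t)$. The $m=2$ contribution, in which both stress vertices lie among the four crossing vertices, involves only four distinct points and carries the factor $t^4\delta_t^{2d+2}=\cO_t(t^2\delta_t^2)$; since $S$ is uniformly bounded by $s$, these terms are smaller than the leading one by $\cO_t(t^{-1})=\cO_t(\delta_t^d)=o(\delta_t)$ for $d\geq3$, so they disappear into the error. The relative error inside the $m=1$ term stems from the boundary layer: to keep the crossing configuration inside $W$ one replaces $W$ by inner parallel sets, and one must control the deviation of the local crossing integral $\int_{W^3}f(v,\cdot)$ from $c_d\delta_t^{2d+2}$ for vertices $v$ within $\cO_t(\delta_t)$ of $\partial W$. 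For the cube this boundary layer has measure $\cO_t(\delta_t)$ and the fibre height is constant, so the error is exactly first order, in the same spirit as Lemma~\ref{lem:intensity measure} and Lemma~\ref{lem:uniform error bound}.

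The step I expect to be most delicate is pinning down the leading constant and the first-order error simultaneously: one has to verify that all identifications in the $m=1$ sum contribute the same integral after using the symmetries of $f$ and $S$, and then control the boundary corrections to the single-vertex-fixed crossing integral $\int_{W^3}f(v,\cdot)$ sharply enough to obtain $\cO_t(\delta_t)$ rather than a cruder rate. I would handle the latter by reusing the inner-parallel-set estimates already developed for the intensity measure, which for the constant-height cube reduce the boundary correction to a volume of order $\delta_t$.
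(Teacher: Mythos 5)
Your proposal is correct and takes essentially the same approach as the paper: the overlap/Mecke decomposition you carry out by hand (with the $m=0$ part cancelling, the $m=1$ part giving $\frac{1}{2}t^5\int_W I_{W,L}^{(1)}(v)S_{W,L}^{(1)}(v)\,\dd v$, and the $m=2$ part giving $\cO_t(t^4\delta_t^{2d+2})$) is exactly the formula the paper quotes from the proof of \cite[Theorem 23]{chimani2020Arxiv}, and your boundary-layer control of the single-vertex crossing integral is precisely the paper's sandwich $J_L^{(1)}(W_{-\delta_t}-v)\leq I_{W,L}^{(1)}(v)\leq J_L^{(1)}(W-v)$ specialized to the cube's constant fibre. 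The only cosmetic difference is that you re-derive the cited decomposition rather than quoting it.
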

We first prove Proposition \ref{prop:clt cube}, after which we will prove the lemmas too.
\begin{proof}[Proof of Proposition \ref{prop:clt cube}]
    From \eqref{eq:variance improved}, we can calculate
    \begin{equation*}
        \Var F_t^{(1)} = \frac{\Var \xi_t(W_L)}{t^7 \radt^{4d+4}} = \frac{1}{8} (2c_d^2 + c_d' c^{-1}) + \cO_t(\radt)
    \end{equation*}
    The variance of $F_t^{(2)}$ follows from \cite[Theorem 6]{Chimani2018}:
    \begin{equation*}
        \Var F_t^{(2)} = \frac{\Var \stress(G,G_L)}{t^3} = \int_W S_{W,L}^{(1)}(v)^2\dd v + \cO_t(t^{-1}).
    \end{equation*}
    The rate of convergence for the covariance $\Cov(F_t^{(1)}, F_t^{(2)})$ is given by
    \begin{equation*}
        \Cov( F_t^{(1)}, F_t^{(2)}) = \frac{\Cov(\xi_t(W_L),\stress(G,G_L))}{t^5\radt^{2d+2}} = \frac{1}{2} c_d \int_W S_{W,L}^{(1)}(v)\dd v  + \cO_t(\radt)
    \end{equation*}
    It follows from Proposition \ref{prop:clt} that
    \begin{equation*}
        \dd_3(F_t,Z_{\Sigma}) \leq  \cO(\radt) + \cO(t^{-1}) + \cO(t^{-1/2}) = \cO(\radt)
    \end{equation*}
    as $t$ tends to infinity.
\end{proof}

The proofs of Lemma \ref{lem:variance cube} and Lemma \ref{lem:covariance cube} mostly follow the same steps as in the derivations of these quantities in \cite{chimani2020Arxiv}. We first summarize the bounds they derived before using them in our proofs, sticking to the notation from \cite{chimani2020Arxiv} as much as possible.

\paragraph{Bounds from \cite{chimani2020Arxiv}}
For a convex body $K$ and a $2$-dimensional plane in $\R^d$, the following integral is defined in \cite[Lemma 8]{chimani2020Arxiv}:
\begin{equation*}
    J_L^{(1)}(K) \coloneqq \int_{(\radt B_d) \times K \times (\radt B_d)} \indi{[0,x]|_L \cap (y+[0,z])|_L\neq \emptyset}\dd z \dd y \dd x.
\end{equation*}
Defining $c_d$ as in Lemma \ref{lem:intensity measure}, the proof of \cite[Lemma 8]{chimani2020Arxiv} contains the following inequalities:
\begin{multline*}
    c_d\radt^{2d+2}\indi{2\radt B_2\subset K |_L} \inf_{u\in \radt B_d} \lambda_{d-2}((u+L^{\perp})\cap K) \leq J_L^{(1)}(K) \\
    \leq c_d\radt^{2d+2}\sup_{u\in \radt B_d} \lambda_{d-2}((u+L^{\perp})\cap K).
\end{multline*}
The following integral is also defined in \cite{chimani2020Arxiv} for points $v\in W$:
\begin{equation*}
    I_{W,L}^{(1)}(v) \coloneqq \int_{W_3} \indi{[v,v_2]_L\cap[v_3,v_4]_L \neq \emptyset,\, \|v-v_2\|\leq \radt,\, \|v_3-v_4\|\leq \radt}\dd v_2 \dd v_3 \dd v_4.
\end{equation*}
By the proof of \cite[Proposition 9]{chimani2020Arxiv}
\begin{equation*}
    J_L^{(1)}(W_{-\radt}-v) \leq I_{W,L}^{(1)}(v) \leq J_L^{(1)}(W-v).
\end{equation*}
The following integral is an analogue of $J_L^{(1)}(K)$ needed to calculate the variance of the crossing number:
\begin{multline*}
    J_L^{(2)}(K) \coloneqq \int_{(\radt B_d) \times K^2 \times (\radt B_d)^2} \indi{[0,x]|_L \cap (y_1+[0,z_1])|_L\neq \emptyset} \\
    \cdot\indi{[0,x]|_L \cap (y_2+[0,z_2])|_L\neq \emptyset} \dd z_1 \dd z_2 \dd y_1 \dd y_2 \dd x
\end{multline*}
Defining $c_d' = \pi \kappa_{d-2}^3 \bfB(3,d/2)^2\bfB(4,d/2)$, it is derived in the proof of \cite[Lemma 10]{chimani2020Arxiv} that
\begin{multline*}
    c_d'\radt^{3d+4}\indi{2\radt B_2 \subset K|_L}\big(\inf_{u\in \radt B_d} \lambda_{d-2}((u+L^{\perp})\cap K)\big)^2 \leq J_L^{(2)}(K)\\
    \leq c_d'\radt^{3d+4}\big(\sup_{u\in \radt B_d} \lambda_{d-2}((u+L^{\perp})\cap K)\big)^2.
\end{multline*}
Letting $v\in W$ again, the following integral is defined in \cite[Proposition 11]{chimani2020Arxiv}:
\begin{multline*}
    I_{W,L}^{(2)}(v) = \int_{W_5}                   
         \indi{[v,v_2]_L\cap[v_3,v_4]_L \neq \emptyset,\, \|v-v_2\|\leq \radt,\, \|v_3-v_4\|\leq \radt}\\
         \cdot\indi{[v,v_2]_L\cap[w_3,w_4]_L \neq \emptyset,\, \|w_3-w_4\|\leq \radt}\dd v_2 \dd v_3 \dd v_4 \dd w_3 \dd w_4.
\end{multline*}
In the proof of the same proposition, it is derived that
\begin{equation*}
    J_L^{(2)}(W_{-\radt}-v) \leq I_{W,L}^{(2)}(v) \leq J_L^{(2)}(W-v).
\end{equation*}

\paragraph{Proofs of the lemmas}
\begin{proof}[Proof of Lemma \ref{lem:variance cube}]
    We closely follow the proof of \cite[Theorem 13]{chimani2020Arxiv}. By this proof, the variance is given by
    \begin{multline}
        \Var \xi_t(L) = \frac{1}{4} t^7 \int_W I_{W,L}^{(1)}(v)^2 \dd v + \frac{1}{8} t^6 \int_W I_{W,L}^{(2)}(v) \dd v \\
        + \cO_t(t^6\radt^{4d+2}) + \cO_t(t^5\radt^{3d+2}) + \cO_t(t^4\radt^{2d+2}).
        \label{eq:variance from proof}
    \end{multline}

    Let $v\in W_{-3\radt}$. Then we use the bounds prepared by \cite{chimani2020Arxiv} to bound $I_{W,L}^{(1)}(v)$ from below:
    \begin{align*}
        I_{W,L}^{(1)}(v) &\geq J_L^{(1)}(W_{-\radt}-v)\\
        &\geq c_d \radt^{2d+2} \inf_{u\in \radt B_d} \lambda_{d-2}((u+L^{\perp})\cap (W_{-\radt}-v))\\
        &= c_d \radt^{2d+2} (1-2\radt)^{d-2},
    \end{align*}
    where $(1-2\radt)^{d-2}$ is the volume of a $(d-2)$-dimensional cube with edge length $1-2\radt$. For $v\in W\setminus  W_{-3\radt}$, a trivial lower bound for $I_{W,L}^{(1)}(v)$ is zero.
    A similar derivation for the upper bound yields for all $v\in W$
    \begin{align*}
        I_{W,L}^{(1)}(v) &\leq J_L^{(1)}(W-v)\\
        &\leq c_d\radt^{2d+2}\sup_{u\in \radt B_d} \lambda_{d-2}((u+L^{\perp})\cap (W-v))\\
        &= c_d\radt^{2d+2}.
    \end{align*}
    It follows that
    \begin{align*}
        \frac{1}{4} t^7 \int_W I_{W,L}^{(1)}(v)^2 \dd v &= \frac{1}{4} t^7 \l(\int_{W_{-3\radt}} I_{W,L}^{(1)}(v)^2 \dd v + \int_{W\setminus W_{-3\radt}} I_{W,L}^{(1)}(v)^2 \dd v\r)\\
        &= \frac{1}{4} t^7\l(c_d^2\radt^{4d+4}(1+\cO_t(\radt)) + c_d^2\radt^{4d+4}\cdot \cO_t(\radt)\r)\\
        &= \frac{1}{4} t^7\radt^{4d+4}c_d^2(1+\cO_t(\radt)).
    \end{align*}

    For the second integral in the variance, we can also use the bounds from \cite{chimani2020Arxiv}. Let $v\in W_{-3\radt}$, then
    \begin{align*}
        I_{W,L}^{(2)}(v) &\geq J_L^{(2)}(W_{-\radt}-v)\\
        &\geq c_d'\radt^{3d+4}\big(\inf_{u\in \radt B_d} \lambda_{d-2}((u+L^{\perp})\cap (W_{-\radt}-v))\big)^2)\\
        &= c_d' \radt^{3d+4} (1-2\radt)^{2(d-2)}.
    \end{align*}
    Zero is again a trivial lower bound for the case where $v\in W\setminus W_{-3\radt}$.
    For the upper bound, we again let $v\in W$ and derive
    \begin{align*}
        I_{W,L}^{(2)}(v) &\leq J_L^{(2)}(W-v)\\
        &\leq c_d'\radt^{3d+4}\big(\sup_{u\in \radt B_d} \lambda_{d-2}((u+L^{\perp})\cap (W-v))\big)^2)\\
        &= c_d' \radt^{3d+4}.
    \end{align*}
    Then,
    \begin{align*}
        \frac{1}{8} t^6 \int_W I_{W,L}^{(2)}(v) \dd v &= \frac{1}{8} t^6 \l(\int_{W_{-3\radt}} I_{W,L}^{(2)}(v) \dd v + \int_{W\setminus W_{-3\radt}} I_{W,L}^{(2)}(v) \dd v\r)\\
        &= \frac{1}{8} t^6 \l(c_d' \radt^{3d+4}(1+\cO_t(\radt) + c_d' \radt^{3d+4} \cdot (\radt)\r)\\
        &= \frac{1}{8} t^6 c_d' \radt^{3d+4}(1+\cO_t(\radt)).
    \end{align*}

    We can plug the expressions we obtained for the integrals into \eqref{eq:variance from proof} to see that
    \begin{equation*}
        \Var \xi_t(L) = \frac{1}{8} t^7 \radt^{4d+4}(2 c_d^2 + c_d' t^{-1}\radt^{-d})(1+\cO_t(\radt)).
    \end{equation*}
    Setting $t\radt^d = c$ leads to \eqref{eq:variance improved}.
\end{proof}

\begin{proof}[Proof of Lemma \ref{lem:covariance cube}]
    By the proof of \cite[Theorem 23]{chimani2020Arxiv}, the covariance is given by
    \begin{equation*}
        \Cov(\xi_t(W_L),\stress(G,G_L)) = \frac{1}{2} t^5\int_W I_{W,L}^{(1)}(v) S_{W,L}^{(1)}(v)\dd v + \cO_t(t^4\radt^{2d+2}).
    \end{equation*}
    Using our bounds on $I_{W,L}^{(1)}$, we can write
    \begin{align*}
        \Cov(\xi_t(W_L),\stress(G,G_L)) &\geq 
        \begin{multlined}[t]
            \frac{1}{2}c_d t^5 \radt^{2d+2} \int_{W} \big(\inf_{u\in \radt B_d} \lambda_{d-2}((u+L^{\perp})\cap (W_{-\radt}-v))\big)\\
            \cdot S_{W,L}(v)\dd v
        \end{multlined}
        \\
        &\geq
        \frac{1}{2}c_d t^5 \radt^{2d+2}(1-2\radt)^{d-2} \int_{W_{-\radt}}S_{W,L}(v)\dd v
    \end{align*}
    and
    \begin{align*}
        \Cov(\xi_t(W_L),\stress(G,G_L)) &\leq 
        \begin{multlined}[t]
            \frac{1}{2}c_d t^5 \radt^{2d+2} \int_{W} \big(\sup_{u\in 2\radt B_d} \lambda_{d-2}((u+L^{\perp})\cap (W-v))\big)\\
            \cdot S_{W,L}(v)\dd v
        \end{multlined}\\
        &=
            \frac{1}{2}c_d t^5 \radt^{2d+2} \int_{W}S_{W,L}(v)\dd v.
    \end{align*}
    As before in the proof of Lemma \ref{lem:variance cube}, we can look at the difference between the upper and lower bound and use that $S_{W,L}(v) \leq s$ for all $v\in W$ to obtain \eqref{eq:covariance improved}.
\end{proof}

\begin{rem}
    From this proof, we can also obtain a uni-variate central limit theorem for the crossings: $\dd_3(F_t^{(1)}, Z_{\sigma_{11}}) = \cO_t(\radt)$, where $Z_{\sigma{11}}\sim \cN(0,\sigma_{11})$.
\end{rem}

\paragraph{Remarks on the rate of convergence}
The proofs in Appendix \ref{sec:cube} show how to calculate the entries of the covariance matrix when $W$ is a cube. It might be possible to derive an order of convergence for a broader class of windows in a similar way to the proof of Lemma \ref{lem:uniform error bound}, but this is significantly more complicated.

The rate of convergence for a cube $W = [0,1]^d$ is of order $\radt$ because of boundary effects for the crossings. Because of these boundary effects, we expect the variance of the crossings and the covariance to be the dominating terms in the upper bound of $\dd_3(F_t, Z_\Sigma)$ for other choices of $W$ too.

\vspace{2em}
\section*{Acknowledgements}
We wish to thank the referees for their thorough reading and helpful comments,
which greatly improved the arguments presented in this paper.

\section*{Funding information}
Funded by the Deutsche Forschungsgemeinschaft (DFG, German Research Foundation) – Project-ID 531542011.

\addcontentsline{toc}{section}{References}
\bibliographystyle{abbrv}
\bibliography{ref}

\end{document}